\newtheorem{theorem}{Theorem}
\newtheorem{axiom}{Axiom}
\newtheorem{corollary}[theorem]{Corollary}
\newtheorem{definition}[axiom]{Definition}
\newtheorem{lemma}[theorem]{Lemma}
\newenvironment{remark}{\rem\rm}{\endrem}
\newcounter{unnumber}
\newenvironment{proof}{\prf\rm}{\hfill{$\blacksquare$}\endprf}
\newcommand{\R}{\mathbb{R}}%
\newcommand{\N}{\mathbb{N}}%
\newcommand{\e}{\varepsilon}%
\newcommand{\ol}{\overline}%
\newcommand{\ul}{\underline}%
\newcommand{\n}{{\nabla}}
\newcommand{\ds}{\displaystyle}
\newcommand{\To}{\longrightarrow}
\def\a{\alpha}
\def\b{\beta}
\def\e{\epsilon}
\def\d{\delta}
\def\t{\theta}
\def\g{\gamma}
\def\l{\lambda}
\def\<{\langle}
\def\>{\rangle}
\DeclareMathOperator*\prox{prox}%
\DeclareMathOperator*\crit{crit}
\DeclareMathOperator*\dist{dist}
\author{ Szil\'{a}rd Csaba L\'{a}szl\'{o} \thanks{Technical University of Cluj-Napoca, Department of Mathematics,
 Str. Memorandumului nr. 28, 400114 Cluj-Napoca, Romania, e-mail: laszlosziszi@yahoo.com This work was supported by a grant of Ministry of Research and Innovation, CNCS - UEFISCDI, project number PN-III-P1-1.1-TE-2016-0266, and  by a grant of Ministry of Research and Innovation,
 CNCS - UEFISCDI, project number PN-III-P4-ID-PCE-2016-0190, within PNCDI III.}}
\title{Convergence rates for an inertial algorithm of gradient type associated to a smooth nonconvex minimization}
\begin{document}
\maketitle

\noindent \textbf{Abstract.}  We investigate an inertial algorithm of gradient type  in connection with the minimization of a nonconvex differentiable function. The algorithm is formulated in the spirit of Nesterov's accelerated convex gradient method. We show that the generated sequences converge to a critical point of the objective function, if a regularization of the objective function satisfies the Kurdyka-{\L}ojasiewicz property. Further, we provide convergence rates for the generated sequences and the objective function values formulated in terms of the {\L}ojasiewicz exponent. \vspace{1ex}

\noindent \textbf{Key Words.} inertial algorithm, nonconvex optimization,  Kurdyka-\L{}ojasiewicz inequality, convergence rate\vspace{1ex}

\noindent \textbf{AMS subject classification.}  90C26, 90C30, 65K10

\section{Introduction}

Let $g:\R^m\To \R$ be a (not necessarily convex) Fr\'{e}chet differentiable function with $L_g$-Lipschitz continuous gradient, i.e. there exists $L_g\ge 0$ such that $\|\n g(x)-\n g(y)\|\le L_g\|x-y\|$ for all $x,y\in \R^m.$ We deal with the optimization problem
\begin{equation}\label{opt-pb} (P) \ \inf_{x\in\R^m}g(x). \end{equation}

We associate to \eqref{opt-pb} the following inertial algorithm of gradient type.
Consider the starting points $x_0=y_0\in\R^m,$ and for all $n\in\N$
\begin{equation}\label{generaldiscrete}\left\{\begin{array}{lll}
\ds x_{n+1}=y_n-s\n g(y_n),\\
\\
\ds y_n=x_n+\frac{\b n}{n+\a}(x_n-x_{n-1}),
\end{array}\right.
\end{equation}
where $\a>0,\b\in(0,1)$ and $0<s<\frac{2(1-\b)}{L_g}.$

Note that \eqref{generaldiscrete} is a nonconvex descendant of the methods of Polyak \cite{poly} and Nesterov \cite{nesterov83}.
Indeed, in \cite{poly}, Polyak  introduced a modified  gradient method  for minimizing a smooth convex
function $g$. His two-step iterative method, the so called heavy ball method, takes the following form:
\begin{equation}\label{polyak}\left\{\begin{array}{lll}
\ds x_{n+1}=y_n-\l_n\n g(x_n),\\
\\
\ds y_n=x_n+\a_n(x_n-x_{n-1}),
\end{array}\right.
\end{equation}
where $\a_n\in [0, 1)$  and $\l_n>0$ is  a step-size parameter.

In a seminal paper \cite{nesterov83}, Nesterov proposed a modification of the heavy ball method in order to obtain optimal convergence rates
for smooth convex functions. More precisely, Nesterov used $\a_n=\frac{t_n-1}{t_{n+1}}$ where $t_n$ satisfies the recursion
$t_{n+1}=\frac{\sqrt{4t_n^2+1}+1}{2},\,t_1=1$  and put $y_n$ also for evaluating the gradient. Additionally, $\l_n$ is chosen in such way that $\l_n\le \frac{1}{L_g}.$ His  scheme in its simplest form is given by:
\begin{equation}\label{nest}\left\{\begin{array}{lll}
\ds x_{n+1}=y_n-s\n g(y_n),\\
\\
\ds y_n=x_n+\frac{t_n-1}{t_{n+1}}(x_n-x_{n-1}),
\end{array}\right.
\end{equation}
where $s\le \frac{1}{L_g}.$

This scheme leads to the convergence rate $g(x_n)-g(\ol x)=\mathcal{O}\left({1}/{n^2}\right)$, where $\ol x$ is a minimizer of the convex function $g$,  and this is optimal among all methods having only information about the gradient of $g$ and consecutive iterates, \cite{Nest}.

By taking $t_n=\frac{n+a-1}{a},\,a\ge2$ in \eqref{nest} we obtain an algorithm that is asymptotically equivalent to the original Nesterov method and leads to the same rate of convergence $\mathcal{O}\left({1}/{n^2}\right)$, (see \cite{su-boyd-candes, ch-do2015}). This case has been considered by Chambolle and Dossal \cite{ch-do2015}, in order to prove the convergence of the iterates of the modified FISTA algorithm (see \cite{bete09}). We emphasize that Algorithm \eqref{generaldiscrete} has a similar form as  the algorithm studied by Chambolle and Dossal (see \cite{ch-do2015} and also \cite{LP}), but we allow the function $g$ to be nonconvex.
 Unfortunately, our analysis do not cover the case  $\b=1.$

Su, Boyd and Cand\`es  (see  \cite{su-boyd-candes}), showed that in case $t_n=\frac{n+1}{2}$ the algorithm \eqref{nest} has the exact limit the second order differential equation
\begin{equation}\label{ee11}
\ddot{x}(t)+\frac{\a}{t}\dot{x}(t)+\n g(x(t))=0.
\end{equation}
with $\a=3.$

 Recently, Attouch and his co-authors (see \cite{att-c-p-r-math-pr2018,att-p-r-jde2016}), proved  that,  if $\a>3$ in \eqref{ee11},  then  the  generated  trajectory $x(t)$  converges  to  a  minimizer  of $g$ as $t\To+\infty$, while the convergence rate of the objective function along the trajectory is $o(1/t^2)$.  Further, in \cite{att-c-r-arx2017}, some results concerning the convergence rate of the objective function  $g$ along  the  trajectory generated by \eqref{ee11}, in  the  subcritical  case $\a\le 3$,  have been obtained. However, the convergence of the generated trajectories by \eqref{ee11} in case $g$ is nonconvex is still an open question.
 Some important steps in this direction have been made in \cite{BCL-AA} (see also \cite{BCL}), where convergence of the trajectories of a system, that can be viewed as a perturbation of \eqref{ee11}, have been obtained in a nonconvex setting. More precisely in \cite{BCL-AA}  is considered the system
  \begin{equation}\label{eee11}
\ddot{x}(t)+\left(\g+\frac{\a}{t}\right)\dot{x}(t)+\n g(x(t))=0,
\end{equation}
 and it is shown that the generated trajectory converges to a critical point of $g$, if a regularization of $g$ satisfies the Kurdyka-{\L}ojasiewicz property.

In what  follows we show that by choosing  appropriate values of $\b$, the numerical scheme \eqref{generaldiscrete}  has as the exact limit  the continuous second order dynamical systems \eqref{ee11} studied in \cite{su-boyd-candes, att-c-p-r-math-pr2018,att-p-r-jde2016,att-c-r-arx2017}, and also the continuous dynamical system \eqref{eee11} studied  in \cite{BCL-AA}. We take to this end in \eqref{generaldiscrete} small step sizes and follow the same approach as Su, Boyd and Cand\`es in \cite{su-boyd-candes}, (see also \cite{BCL-AA}). For this purpose we rewrite \eqref{generaldiscrete} in the form
\begin{equation}\label{e411}
\frac{x_{n+1}-x_n}{\sqrt{s}}=\frac{\b n}{n+\a}\cdot\frac{x_n-x_{n-1}}{\sqrt{s}}-\sqrt{s}\n g(y_n) \ \forall n \geq 1
\end{equation}
and introduce the {\it Ansatz} $x_n\approx x(n\sqrt{s})$ for some twice continuously differentiable function $x : [0,+\infty) \rightarrow \R^n$. We let $n=\frac{t}{\sqrt{s}}$ and get  $x(t)\approx x_n,\,x(t+\sqrt{s})\approx x_{n+1},\,x(t-\sqrt{s})\approx x_{n-1}.$
Then, as the step size $s$ goes to zero, from the Taylor expansion of $x$ we obtain
$$\frac{x_{n+1}-x_n}{\sqrt{s}}=\dot{x}(t)+\frac12\ddot{x}(t)\sqrt{s}+o(\sqrt{s})$$
and
$$\frac{x_n-x_{n-1}}{\sqrt{s}}=\dot{x}(t)-\frac12\ddot{x}(t)\sqrt{s}+o(\sqrt{s}).$$

Further, since
$$\sqrt{s}\|\n g(y_n)-\n g(x_n)\|\le\sqrt{s} L_g\|y_n-x_n\|=\sqrt{s} L_g\left|\frac{\b n}{n+\a}\right|\|x_n-x_{n-1}\|
=o(\sqrt{s}),$$
it follows $\sqrt{s} \n g(y_n)=\sqrt{s}\n g(x_n)+ o(\sqrt{s})$. Consequently, \eqref{e411} can be written as
$$\dot{x}(t)+\frac12\ddot{x}(t)\sqrt{s}+ o(\sqrt{s})= $$
$$\frac{\b t}{t+\a\sqrt{s}}\left(\dot{x}(t)-\frac12\ddot{x}(t)\sqrt{s}+ o(\sqrt{s})\right)-\sqrt{s}\n g(x(t))+ o(\sqrt{s})$$
or, equivalently
$$(t+\a\sqrt{s})\left(\dot{x}(t)+\frac12\ddot{x}(t)\sqrt{s}+o(\sqrt{s})\right)=$$
$$\b t\left(\dot{x}(t)-\frac12\ddot{x}(t)\sqrt{s}+o(\sqrt{s})\right)-\sqrt{s}(t+\a\sqrt{s})\n g(x(t))+o(\sqrt{s}).$$
Hence,
\begin{equation}\label{ecom}
\frac12\left(\a\sqrt{s}+(1+\b)t\right)\ddot{x}(t)\sqrt{s}+\left((1-\b)t+\a\sqrt{s}\right)\dot{x}(t)+\sqrt{s}(t+\a\sqrt{s})\n g(x(t))=o(\sqrt{s}).
\end{equation}
Now, if we take $\b=1-\g {s}<1$ in \eqref{ecom} for some $\frac{1}{{s}}>\g>0$, we obtain
$$\frac12\left(\a \sqrt{s}+(2-\g{s})t\right)\ddot{x}(t)\sqrt{s}+\left(\g{s}t+\a\sqrt{s}\right)\dot{x}(t)+\sqrt{s}(t+\a\sqrt{s})\n g(x(t))=o(\sqrt{s}).$$

After dividing by $\sqrt{s}$ and letting $s\rightarrow 0$, we obtain
$$t\ddot{x}(t)+\a\dot{x}(t)+t\n g(x(t))=0,$$
which, after division by $t$, gives \eqref{ee11}, that is
$$\ddot{x}(t)+\frac{\a}{t}\dot{x}(t)+\n g(x(t))=0.$$

Similarly, by taking   $\b=1-\g\sqrt{s}<1$ in \eqref{ecom}, for some $\frac{1}{\sqrt{s}}>\g>0$, we obtain
$$\frac12\left(\a \sqrt{s}+(2-\g\sqrt{s})t\right)\ddot{x}(t)\sqrt{s}+\left(\g\sqrt{s}t+\a\sqrt{s}\right)\dot{x}(t)+\sqrt{s}(t+\a\sqrt{s})\n g(x(t))=o(\sqrt{s}).$$
 After dividing by $\sqrt{s}$ and letting $s\rightarrow 0$, we get
$$t\ddot{x}(t)+(\g t+\a)\dot{x}(t)+t\n g(x(t))=0,$$
which, after division by $t$, gives \eqref{eee11}, that is
$$\ddot{x}(t)+\left(\g+\frac{\a}{t}\right)\dot{x}(t)+\n g(x(t))=0.$$

Consequently, our numerical scheme \eqref{generaldiscrete} can be seen as the discrete counterpart of the continuous dynamical systems \eqref{ee11} and \eqref{eee11}, in a full nonconvex setting.

The techniques for proving the convergence of \eqref{generaldiscrete} use the same main ingredients as other algorithms for nonconvex optimization problems involving KL functions. More precisely, in the next section, we show a sufficient decrease property for
the iterates, which also ensures that the iterates gap belongs to $l^2$, further we show that the set of cluster points of the iterates is included in the set of critical points of the  objective function, and, finally, we use the KL property of an appropriate regularization of the objective function in order to obtain  that the iterates gap belongs to $l^1$, which implies the  convergence of the iterates, see also \cite{att-b-sv2013,b-sab-teb,BCL1}.
Moreover, in  section 3, we obtain several convergence rates both for the sequences $(x_n)_{n\in\N},\,(y_n)_{n\in\N}$ generated by the numerical scheme \eqref{generaldiscrete}, as well as for the function values $g(x_n),\,g(y_n)$ in the terms of the {\L}ojasiewicz exponent of $g$ and a regularization of $g$, respectively (for some general results see  \cite{FGP}).

\section{The convergence of the generated sequences}
In this section we investigate the convergence of the proposed algorithm. We show that the sequences generated by the numerical scheme \eqref{generaldiscrete} converge  to a critical point of the objective function $g$, provided the regularization of $g$, $H(x,y)=g(x)+\frac12\|y-x\|^2,$ is a KL function.
The main tool in our forthcoming analysis is the so called descent lemma, see \cite{Nest}.

\begin{lemma}\label{desc} Let $g:\R^m\To\R$ be Fr\`echet differentiable with $L_g$ Lipschitz continuous gradient. Then
$$g(y)\le g(x)+\<\n g(x),y-x\>+\frac{L_g}{2}\|y-x\|^2,\,\forall x,y\in\R^m.$$
\end{lemma}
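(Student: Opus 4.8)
The plan is to reduce everything to the fundamental theorem of calculus applied along the line segment joining $x$ and $y$; this is the classical argument for the descent lemma. Fix $x,y\in\R^m$ and define $\varphi:[0,1]\To\R$ by $\varphi(t)=g(x+t(y-x))$. Since $g$ is Fr\'echet differentiable, $\varphi$ is differentiable with $\varphi'(t)=\<\n g(x+t(y-x)),y-x\>$, and since $\n g$ is $L_g$-Lipschitz it is in particular continuous, so $\varphi'$ is continuous on $[0,1]$. Hence the fundamental theorem of calculus applies and
$$g(y)-g(x)=\varphi(1)-\varphi(0)=\int_0^1\<\n g(x+t(y-x)),y-x\>\,dt.$$

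Next I would isolate the linear term. Writing $\<\n g(x),y-x\>=\int_0^1\<\n g(x),y-x\>\,dt$ and subtracting, one gets
$$g(y)-g(x)-\<\n g(x),y-x\>=\int_0^1\<\n g(x+t(y-x))-\n g(x),y-x\>\,dt.$$
Applying the Cauchy--Schwarz inequality inside the integral and then the $L_g$-Lipschitz continuity of $\n g$ gives, for every $t\in[0,1]$,
$$\<\n g(x+t(y-x))-\n g(x),y-x\>\le\|\n g(x+t(y-x))-\n g(x)\|\,\|y-x\|\le L_g\,\|t(y-x)\|\,\|y-x\|=L_g t\|y-x\|^2.$$

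Finally, integrating this pointwise bound over $t\in[0,1]$ yields
$$g(y)-g(x)-\<\n g(x),y-x\>\le\int_0^1 L_g t\|y-x\|^2\,dt=\frac{L_g}{2}\|y-x\|^2,$$
and rearranging gives exactly the asserted inequality. There is no genuine obstacle in this proof; the only point deserving a word of care is the justification that $\varphi$ is continuously differentiable so that the fundamental theorem of calculus is legitimate, which is immediate from the differentiability of $g$ together with the continuity of $\n g$ (a consequence of its Lipschitz property).
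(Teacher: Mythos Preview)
Your proof is correct and is the standard argument for the descent lemma. Note that the paper does not actually prove this statement: it is stated as Lemma~\ref{desc} with a reference to \cite{Nest}, so there is no in-paper proof to compare against; your argument is precisely the classical one found in that reference.
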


Now we are able to obtain a decrease property for the iterates generated by \eqref{generaldiscrete}.

\begin{theorem}\label{decreasing} In the settings of problem \eqref{opt-pb}, for some starting points $x_0=y_0\in\R^m$ let $(x_n)_{n\in\N},\,(y_n)_{n\in\N}$ be the sequences generated by the numerical scheme \eqref{generaldiscrete}. Consider the sequences
 $$A_{n-1}=\frac{2-s L_g}{2s}\left(\frac{(1+\b)n+\a}{n+\a}\right)^2-\frac{\b n((1+\b)n+\a)}{s(n+\a)^2},$$
$$C_{n-1}=\frac{2-s L_g}{2s}\frac{\b n-\b}{n+\a-1}\frac{(1+\b)n+\a}{n+\a}-\frac{1}{2s}\frac{\b n-\b}{n+\a-1}\frac{\b n}{n+\a}$$
and
$$\d_n=A_{n-1}-C_{n-1}$$
for all $n\in\N,\,n\ge 1$.

 Then, there exists $N\in\N$ such that 
\begin{itemize}
\item[(i)] The sequence $ \left(g(y_{n})+\d_{n}\|x_{n}-x_{n-1}\|^2\right)_{n\ge N}$ is decreasing and $\d_n>0$ for all $n\ge N$.
\end{itemize}
Assume that $g$ is bounded from below. Then, the following statements hold.
\begin{itemize}
\item[(ii)] The sequence $\left(g(y_{n})+\d_{n}\|x_{n}-x_{n-1}\|^2\right)_{n\in \N}$ is convergent;
\item[(iii)] $\sum_{n\ge 1}\|x_n-x_{n-1}\|^2<+\infty.$
\end{itemize}
\end{theorem}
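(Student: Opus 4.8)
My plan is to apply Lemma~\ref{desc} directly along the $y$-sequence, stepping from $y_n$ to $y_{n+1}$. Put $\gamma_n:=\frac{\beta n}{n+\a}$, so $\gamma_n\to\beta$ and $\frac{(1+\beta)n+\a}{n+\a}=1+\gamma_n$. From the definitions of $y_n$, $y_{n+1}$ and of the gradient step one obtains the exact identities
$$y_{n+1}-y_n=(1+\gamma_{n+1})(x_{n+1}-x_n)-\gamma_n(x_n-x_{n-1}),\qquad \n g(y_n)=\tfrac1s\bigl(\gamma_n(x_n-x_{n-1})-(x_{n+1}-x_n)\bigr).$$
Substituting these into $g(y_{n+1})\le g(y_n)+\<\n g(y_n),y_{n+1}-y_n\>+\frac{L_g}{2}\|y_{n+1}-y_n\|^2$ and collecting, the coefficients of $\|x_{n+1}-x_n\|^2$, of $\<x_{n+1}-x_n,x_n-x_{n-1}\>$ and of $\|x_n-x_{n-1}\|^2$ turn out to be exactly $-A_n$, $2C_n$ and $-\frac{2-sL_g}{2s}\gamma_n^2$ — this is precisely how $A_n$, $C_n$ arise, after rewriting the given expressions as $A_n=(1+\gamma_{n+1})\frac{2-sL_g(1+\gamma_{n+1})}{2s}$ and $C_n=\gamma_n\frac{2+\gamma_{n+1}-sL_g(1+\gamma_{n+1})}{2s}$. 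Hence, for every $n\ge1$,
$$g(y_{n+1})+A_n\|x_{n+1}-x_n\|^2\le g(y_n)+2C_n\<x_{n+1}-x_n,x_n-x_{n-1}\>-\tfrac{2-sL_g}{2s}\gamma_n^2\|x_n-x_{n-1}\|^2,$$
an inequality in which the descent lemma is the only estimate used.

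The next step is to discard the cross term. Since $\gamma_n\to\beta$ and $sL_g(1+\beta)<2(1-\beta^2)<2+\beta$, we have $C_n\to\frac{\beta}{2s}\bigl(2+\beta-sL_g(1+\beta)\bigr)>0$, so $C_n>0$ for $n$ large; then $2C_n\<x_{n+1}-x_n,x_n-x_{n-1}\>\le C_n\|x_{n+1}-x_n\|^2+C_n\|x_n-x_{n-1}\|^2$ turns the previous line into
$$g(y_{n+1})+(A_n-C_n)\|x_{n+1}-x_n\|^2\le g(y_n)+\Bigl(C_n-\tfrac{2-sL_g}{2s}\gamma_n^2\Bigr)\|x_n-x_{n-1}\|^2.$$
Now $A_n-C_n=\delta_{n+1}$ by definition, so everything reduces to comparing the right-hand coefficient with $\delta_n=A_{n-1}-C_{n-1}$. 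All the $\gamma$'s involved tend to $\beta$, so $\delta_n-C_n+\frac{2-sL_g}{2s}\gamma_n^2$ converges, and a short algebraic simplification shows its limit equals $\frac{2(1-\beta)-sL_g}{2s}$, which is strictly positive exactly because of the standing assumption $s<\frac{2(1-\beta)}{L_g}$. This is the only place the step-size bound is used, and it is the computational heart of the argument. Consequently there are $\e>0$ and $N\in\N$ with $\delta_n>0$ and $C_n-\frac{2-sL_g}{2s}\gamma_n^2\le\delta_n-\e$ for all $n\ge N$, whence
$$g(y_{n+1})+\delta_{n+1}\|x_{n+1}-x_n\|^2\le g(y_n)+\delta_n\|x_n-x_{n-1}\|^2-\e\|x_n-x_{n-1}\|^2\qquad(n\ge N).$$
Dropping the last term proves (i).

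Assume now $g$ is bounded from below. For (ii): by (i) the sequence $\bigl(g(y_n)+\delta_n\|x_n-x_{n-1}\|^2\bigr)_{n\ge N}$ is non-increasing, and since $\delta_n>0$ there it is bounded below by $\inf_{\R^m}g>-\infty$; a non-increasing sequence bounded from below converges, and the finitely many terms with $n<N$ do not affect convergence. For (iii): summing the last displayed inequality from $n=N$ to $n=M$ and telescoping gives $\e\sum_{n=N}^{M}\|x_n-x_{n-1}\|^2\le\bigl(g(y_N)+\delta_N\|x_N-x_{N-1}\|^2\bigr)-\inf_{\R^m}g$, a bound independent of $M$; letting $M\to\infty$ and adjoining the finitely many initial terms yields $\sum_{n\ge1}\|x_n-x_{n-1}\|^2<+\infty$.

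The part I expect to cost the most effort is the coefficient bookkeeping: carrying out the lengthy but mechanical expansion that produces $-A_n$, $2C_n$, $-\frac{2-sL_g}{2s}\gamma_n^2$ exactly, verifying $\delta_{n+1}=A_n-C_n$, and running the limit $\delta_n-C_n+\frac{2-sL_g}{2s}\gamma_n^2\to\frac{2(1-\beta)-sL_g}{2s}$ that produces a single index $N$ serving all the required inequalities at once. Everything there reduces to elementary polynomial inequalities in $\beta$ and $sL_g$ that become equalities precisely at $sL_g=2(1-\beta)$.
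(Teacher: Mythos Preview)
Your proof is correct and follows essentially the same route as the paper. Both apply the descent lemma from $y_n$ to $y_{n+1}$, expand in terms of $x_{n+1}-x_n$ and $x_n-x_{n-1}$, identify the coefficients $A_n$, $C_n$, $B_n=\frac{2-sL_g}{2s}\gamma_n^2$, and then show that $\Delta_n:=\delta_n-C_n+B_n\to\frac{2(1-\beta)-sL_g}{2s}>0$, which is exactly the paper's limit. The only cosmetic difference is in how the cross term is eliminated: you use $2C_n\langle u,v\rangle\le C_n\|u\|^2+C_n\|v\|^2$, whereas the paper uses the polarization identity $-2\langle u,v\rangle=\|u-v\|^2-\|u\|^2-\|v\|^2$, which produces the same inequality plus an extra nonnegative term $C_n\|x_{n+1}+x_{n-1}-2x_n\|^2$ on the left. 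That extra term is irrelevant for Theorem~\ref{decreasing} (it is simply dropped there), but the paper keeps it because it is reused in the later analysis (inequality \eqref{e6}); your version loses nothing for the present statement.
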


\begin{proof}

From \eqref{generaldiscrete} we have $\n g(y_n)=\frac{1}{s}(y_n-x_{n+1})$, hence
$$\<\n g(y_n),y_{n+1}-y_n\>=\frac{1}{s}\<y_n-x_{n+1},y_{n+1}-y_n\>.$$
Now, from Lemma \ref{desc} we obtain
$$g(y_{n+1})\le g(y_n)+\<\n g(y_n),y_{n+1}-y_n\>+\frac{L_g}{2}\|y_{n+1}-y_n\|^2,$$
consequently we have
\begin{equation}\label{e1}
g(y_{n+1})-\frac{L_g}{2}\|y_{n+1}-y_n\|^2\le g(y_n)+ \frac{1}{s}\<y_n-x_{n+1},y_{n+1}-y_n\>.
\end{equation}

Further,
$$\<y_n-x_{n+1},y_{n+1}-y_n\>=-\|y_{n+1}-y_n\|^2+\<y_{n+1}-x_{n+1},y_{n+1}-y_n\>,$$
 and
$$y_{n+1}-x_{n+1}=\frac{\b(n+1)}{n+\a+1}(x_{n+1}-x_n),$$
hence
\begin{equation}\label{e2}
g(y_{n+1})+\left(\frac{1}{s}-\frac{L_g}{2}\right)\|y_{n+1}-y_n\|^2\le g(y_n)+ \frac{\frac{\b(n+1)}{n+\a+1}}{s}\<x_{n+1}-x_n,y_{n+1}-y_n\>.
\end{equation}

Since $$y_{n+1}-y_n=\frac{(1+\b)n+\a+\b+1}{n+\a+1}(x_{n+1}-x_n)-\frac{\b n}{n+\a}(x_n-x_{n-1}),$$
we have,
$$\|y_{n+1}-y_n\|^2=\left\|\frac{(1+\b)n+\a+\b+1}{n+\a+1}(x_{n+1}-x_n)-\frac{\b n}{n+\a}(x_n-x_{n-1})\right\|^2=$$
$$\left(\frac{(1+\b)n+\a+\b+1}{n+\a+1}\right)^2\|x_{n+1}-x_n\|^2+\left(\frac{\b n}{n+\a}\right)^2\|x_n-x_{n-1}\|^2-$$
$$2\frac{(1+\b)n+\a+\b+1}{n+\a+1}\frac{\b n}{n+\a}\<x_{n+1}-x_n,x_n-x_{n-1}\>,$$
and
$$\<x_{n+1}-x_n,y_{n+1}-y_n\>=\left\<x_{n+1}-x_n,\frac{(1+\b)n+\a+\b+1}{n+\a+1}(x_{n+1}-x_n)-\frac{\b n}{n+\a}(x_n-x_{n-1})\right\>=$$
$$\frac{(1+\b)n+\a+\b+1}{n+\a+1}\|x_{n+1}-x_n\|^2-\frac{\b n}{n+\a}\<x_{n+1}-x_n,x_n-x_{n-1}\>.$$

Replacing the above equalities in \eqref{e2}, we obtain

$$g(y_{n+1})+\frac{(2-s L_g)\left(\frac{(1+\b)n+\a+\b+1}{n+\a+1}\right)^2-2\frac{\b(n+1)((1+\b)n+\a+\b+1)}{(n+\a+1)^2}}{2s}\|x_{n+1}-x_n\|^2\le$$
$$g(y_n)-\frac{(2-s L_g)\left(\frac{\b n}{n+\a}\right)^2}{2s}\|x_{n}-x_{n-1}\|^2+$$
$$\frac{(2-s L_g)\frac{\b n}{n+\a}\frac{(1+\b)n+\a+\b+1}{n+\a+1}-\frac{\b n}{n+\a}\frac{\b(n+1)}{n+\a+1}}{s}\<x_{n+1}-x_n,x_n-x_{n-1}\>.$$

For simplicity let 
$$B_n=\frac{(2-s L_g)\left(\frac{\b n}{n+\a}\right)^2}{2s}$$
for all $n\in\N.$

 Hence we have
 $$g(y_{n+1})+A_{n}\|x_{n+1}-x_n\|^2-2C_n\<x_{n+1}-x_n,x_n-x_{n-1}\>\le g(y_n)-B_n\|x_{n}-x_{n-1}\|^2.$$

By using the equality
\begin{equation}\label{e5}
-2\<x_{n+1}-x_n,x_n-x_{n-1}\>=\|x_{n+1}+x_{n-1}-2x_n\|^2-\|x_{n+1}-x_n\|^2-\|x_n-x_{n-1}\|^2
\end{equation}
we obtain
$$g(y_{n+1})+(A_n-C_n)\|x_{n+1}-x_n\|^2+C_n\|x_{n+1}+x_{n-1}-2x_n\|^2\le g(y_n)+(C_n-B_n)\|x_n-x_{n-1}\|^2.$$

Note that $A_{n}-C_n=\d_{n+1}$ and let us denote $\Delta_n=B_n+A_{n-1}-C_{n-1}-C_n$. Consequently the following inequality holds.
\begin{equation}\label{e6}
C_n\|x_{n+1}+x_{n-1}-2x_n\|^2+\Delta_n\|x_n-x_{n-1}\|^2\le (g(y_n)+\d_n\|x_n-x_{n-1}\|^2)- (g(y_{n+1})+\d_{n+1}\|x_{n+1}-x_n\|^2).
\end{equation}

Since $0<\b<1$ and $s<\frac{2(1-\b)}{L_g},$  we have
$$\lim_{n\To+\infty}A_n=\frac{(2-s L_g)(\b+1)^2-2\b-2\b^2}{2s}>0,$$
$$\lim_{n\To+\infty}B_n=\frac{(2-s L_g)\b^2}{2s}>0,$$
$$\lim_{n\To+\infty}C_n= \frac{(2-s L_g)(\b^2+\b)-\b^2}{2s} >0,$$
$$\lim_{n\To+\infty}\Delta_n=\frac{2-s L_g-2\b}{2s}>0,$$
and
$$\lim_{n\To+\infty}\d_n=\frac{2-\b^2-s L_g(\b+1)}{2s}>0.$$

Hence, there exists $N\in\N$ and $C>0,\,D>0$ such that for all $n\ge N$ one has
$$C_n\ge C,\, \Delta_n\ge D\mbox{ and }\d_n>0$$ which, in the view of \eqref{e6}, shows (i), that is,  the sequence $g(y_n)+\d_n\|x_n-x_{n-1}\|^2$ is decreasing for $n\ge N.$

Assume now that $g$ is bounded from below. 
 By using \eqref{e6} again, we obtain
 $$0\le C\|x_{n+1}+x_{n-1}-2x_n\|^2+D\|x_n-x_{n-1}\|^2\le (g(y_n)+\d_n\|x_n-x_{n-1}\|^2)- (g(y_{n+1})+\d_{n+1}\|x_{n+1}-x_n\|^2),$$
for all $n\ge N,$ or more convenient, that
 \begin{equation}\label{ineq}
0\le D\|x_n-x_{n-1}\|^2\le (g(y_n)+\d_n\|x_n-x_{n-1}\|^2)- (g(y_{n+1})+\d_{n+1}\|x_{n+1}-x_n\|^2),
\end{equation}
for all $n\ge N.$ Let $r>N.$ By summing up the latter relation  we have
$$
D\sum_{n=N}^r\|x_n-x_{n-1}\|^2\le (g(y_N)+\d_N\|x_N-x_{N-1}\|^2)-(g(y_{r+1})+\d_{r+1}\|x_{r+1}-x_r\|^2)
$$
which leads to
\begin{equation}\label{forcoercive}
g(y_{r+1})+D\sum_{n=N}^r\|x_n-x_{n-1}\|^2\le g(y_N)+\d_N\|x_N-x_{N-1}\|^2.
\end{equation}
Now,  taking into account that $g$ is bounded from below, by letting $r\To+\infty$  we obtain
$$\sum_{n=N}^{\infty}\|x_n-x_{n-1}\|^2\le+\infty$$ which proves (iii).

The latter relation also shows that
$$\lim_{n\To+\infty}\|x_n-x_{n-1}\|^2=0,$$
hence $$\lim_{n\To+\infty}\d_n\|x_n-x_{n-1}\|^2=0.$$
But then, from the fact that $g$ is bounded from below we obtain that the sequence $g(y_n)+\d_n\|x_n-x_{n-1}\|^2$ is bounded from below. On the other hand, from (i) we have that  the sequence $g(y_n)+\d_n\|x_n-x_{n-1}\|^2$ is decreasing for $n\ge N,$ hence there exists
$$\lim_{n\To+\infty} g(y_n)+\d_n\|x_n-x_{n-1}\|^2\in\R.$$
\end{proof}

\begin{remark}\label{r1} Observe that conclusion (iii) in the hypotheses of Theorem \ref{decreasing} assures that the sequence  $(x_n-x_{n-1})_{n\in\N}\in l^2,$ in particular that
\begin{equation}\label{e7}
\lim_{n\To+\infty}(x_n-x_{n-1})=0.
\end{equation}
\end{remark}

Let us denote by $\omega((x_n)_{n\in\N})$ the set of cluster points of the sequence $(x_n)_{n\in\N},$ and denote by $\crit(g)=\{x\in\R^m: \nabla g(x)=0\}$ the set of critical points of $g$.

In the following result we use the distance function to a set, defined for $A\subseteq\R^n$ as $\dist(x,A)=\inf_{y\in A}\|x-y\|$ for all $x\in\R^n$.

\begin{lemma}\label{regularization} In the settings of problem \eqref{opt-pb}, for some starting points $x_0=y_0\in\R^m$ consider the sequences $(x_n)_{n\in\N},\,(y_n)_{n\in\N}$ generated by Algorithm \eqref{generaldiscrete}. Assume that $g$ is bounded from below and consider the function
$$H:\R^m\times\R^m\To\R,\,H(x,y)=g(x)+\frac12\|y-x\|^2.$$
Consider further the sequence
$$u_n=\sqrt{2\d_n}(x_{n}-x_{n-1})+y_n\mbox{, for all }n\in \N,$$
where $\d_n$ was defined in Theorem \ref{decreasing}. Then, the following statements hold true.
\begin{itemize}
\item[(i)] $\omega((u_n)_{n\in\N})=\omega((y_n)_{n\in\N})=\omega((x_n)_{n\in\N})\subseteq\crit g$;
\item[(ii)] There exists and is finite the limit $\lim_{n\To+\infty}H(y_n,u_n)$;
\item[(iii)] $\omega((y_n,u_n)_{n\in\N})\subseteq\crit H=\{(x,x)\in\R^m\times\R^m:x\in\crit g\}$;
\item[(iv)] $\|\n H(y_n,u_n)\|\le \frac{1}{s}\|x_{n+1}-x_n\|+\left(\frac{\b  n}{s(n+\a)}+2\sqrt{2\d_n}\right)\|x_{n}-x_{n-1}\|$ for all $n\in\N$;
\item[(v)]  $\|\n H(y_n,u_n)\|^2\le \frac{2}{s^2}\|x_{n+1}-x_n\|^2+2\left(\left(\frac{\b n}{s(n+\a)}-\sqrt{2\d_n}\right)^2+\d_n\right)\|x_{n}-x_{n-1}\|^2$ for all $n\in\N$;
\item[(vi)] $H$ is finite and constant on $\omega((y_n,u_n)_{n\in\N}).$
\end{itemize}

Assume that $(x_n)_{n\in\N}$ is bounded. Then,
\begin{itemize}
\item[(vii)] $\omega((y_n, u_n)_{n\in\N})$ is nonempty and compact;
\item[(viii)] $\lim_{n\To+\infty} \dist((y_n,u_n),\omega((y_n,u_n)_{n\in\N}))=0.$
\end{itemize}
\end{lemma}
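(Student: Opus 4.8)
The plan is to reduce every assertion to the conclusions of Theorem \ref{decreasing}: that $x_n-x_{n-1}\to 0$ and $\sum_{n\ge 1}\|x_n-x_{n-1}\|^2<+\infty$ (see also Remark \ref{r1}), that $(\d_n)$ converges to the positive limit $\frac{2-\b^2-sL_g(\b+1)}{2s}$ and hence $(\sqrt{2\d_n})_{n\ge N}$ is bounded, and that $(g(y_n)+\d_n\|x_n-x_{n-1}\|^2)$ converges. All arguments are run for indices $n\ge N$, where $\d_n>0$ so that $u_n$ (and $\sqrt{2\d_n}$) is well defined. For (i): one has $y_n-x_n=\frac{\b n}{n+\a}(x_n-x_{n-1})\to 0$ and $u_n-y_n=\sqrt{2\d_n}(x_n-x_{n-1})\to 0$, so $(x_n)$, $(y_n)$, $(u_n)$ pairwise differ by null sequences and therefore share the same cluster points. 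To obtain $\omega((x_n)_{n\in\N})\subseteq\crit g$, rewrite \eqref{generaldiscrete} as $\n g(y_n)=\frac1s(y_n-x_{n+1})=\frac1s\left(\frac{\b n}{n+\a}(x_n-x_{n-1})-(x_{n+1}-x_n)\right)\to 0$; if $y_{n_k}\to x^*$, continuity of $\n g$ gives $\n g(x^*)=0$. For (ii), note that $H(y_n,u_n)=g(y_n)+\frac12\|u_n-y_n\|^2=g(y_n)+\d_n\|x_n-x_{n-1}\|^2$, which converges by Theorem \ref{decreasing}(ii). For (iii), since $\n H(x,y)=(\n g(x)+x-y,\,y-x)$, the critical set of $H$ is exactly $\{(x,x):x\in\crit g\}$; any cluster point $(y^*,u^*)$ of $(y_n,u_n)_{n\in\N}$ satisfies $u^*=y^*$ (because $u_n-y_n\to 0$) and $y^*\in\crit g$ by (i).

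For (iv) and (v) the plan is a direct computation of $\n H(y_n,u_n)=(\n g(y_n)+y_n-u_n,\,u_n-y_n)$. Substituting $u_n-y_n=\sqrt{2\d_n}(x_n-x_{n-1})$ and $\n g(y_n)=\frac1s\left(\frac{\b n}{n+\a}(x_n-x_{n-1})-(x_{n+1}-x_n)\right)$, the first block becomes $-\frac1s(x_{n+1}-x_n)+\left(\frac{\b n}{s(n+\a)}-\sqrt{2\d_n}\right)(x_n-x_{n-1})$. Then (iv) follows from the triangle inequality applied to the two blocks (using $\left|\frac{\b n}{s(n+\a)}-\sqrt{2\d_n}\right|\le\frac{\b n}{s(n+\a)}+\sqrt{2\d_n}$ and $\|u_n-y_n\|=\sqrt{2\d_n}\|x_n-x_{n-1}\|$), and (v) follows from $\|(a,b)\|^2=\|a\|^2+\|b\|^2$ together with $\|u+v\|^2\le 2\|u\|^2+2\|v\|^2$. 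Part (vi) is immediate from continuity of $H$ (as $g$ is differentiable, hence continuous): on any cluster point $(y^*,u^*)$ of $(y_n,u_n)_{n\in\N}$ one has $H(y^*,u^*)=\lim_{n\To+\infty}H(y_n,u_n)$, which is the finite limit from (ii), so $H$ is finite and constant on $\omega((y_n,u_n)_{n\in\N})$.

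Finally, for (vii) and (viii) I would use that boundedness of $(x_n)$ forces $(y_n)$ bounded (since $y_n-x_n\to 0$) and then $(u_n)$ bounded, so $(y_n,u_n)_{n\in\N}$ is a bounded sequence in $\R^m\times\R^m$. Hence $\omega((y_n,u_n)_{n\in\N})$ is nonempty by Bolzano--Weierstrass, and compact as the nested intersection $\bigcap_{k\ge N}\cl\{(y_n,u_n):n\ge k\}$ of nonempty compact sets; this gives (vii). Statement (viii) is the standard fact that a bounded sequence approaches its cluster set: if $\dist((y_{n_k},u_{n_k}),\omega((y_n,u_n)_{n\in\N}))\ge\e$ along some subsequence, extracting a further convergent subsequence yields a cluster point of $(y_n,u_n)_{n\in\N}$ at distance $\ge\e$ from its own cluster set, a contradiction. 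All the computations here are routine; the only genuine point of care is the bookkeeping for $\d_n$ — one must first fix $N$ from Theorem \ref{decreasing} so that $\d_n>0$ and $(\sqrt{2\d_n})_{n\ge N}$ is bounded before running the limit arguments and matching the exact constants in the gradient estimates.
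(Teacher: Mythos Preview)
Your proposal is correct and follows essentially the same route as the paper: the key identifications $H(y_n,u_n)=g(y_n)+\d_n\|x_n-x_{n-1}\|^2$ and $\n g(y_n)=\frac1s(y_n-x_{n+1})$, together with $x_n-x_{n-1}\to 0$ from Theorem~\ref{decreasing}, drive all of (i)--(vi) exactly as in the paper, and your gradient estimates in (iv)--(v) reproduce the paper's bounds via the same block decomposition and the inequality $\|u+v\|^2\le 2\|u\|^2+2\|v\|^2$. The only noticeable differences are in (vii) and (viii): the paper shows $\omega((x_n)_{n\in\N})$ is closed by a direct diagonal extraction, whereas you invoke the nested-intersection description of the cluster set; and for (viii) the paper simply asserts the conclusion from the existence of one convergent subsequence, while your contradiction argument via Bolzano--Weierstrass is actually the more rigorous of the two. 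Your remark that one must restrict to $n\ge N$ so that $\d_n>0$ and $\sqrt{2\d_n}$ is defined is a valid point of care that the paper glosses over.
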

\begin{proof}
(i) Let $\ol x\in\omega((x_n)_{n\in\N}).$ Then, there exists a subsequence $(x_{n_k})_{k\in \N}$ of $(x_n)_{n\in\N}$ such that
$$\lim_{k\to+\infty}x_{n_k}=\ol x.$$
Since by \eqref{e7} $\lim_{n\To+\infty}(x_n-x_{n-1})=0$ and the sequences $(\sqrt{2\d_n})_{n\in\N},\,\left(\frac{\b n}{n+\a}\right)_{n\in\N}$ converge, we obtain that
$$\lim_{k\to+\infty}y_{n_k}=\lim_{k\to+\infty}u_{n_k}=\lim_{k\to+\infty}x_{n_k}=\ol x,$$
which shows that $$\omega((x_n)_{n\in\N})\subseteq \omega((u_n)_{n\in\N})\mbox{ and }\omega((x_n)_{n\in\N})\subseteq \omega((y_n)_{n\in\N}).$$
Further from \eqref{generaldiscrete}, the continuity of $\n g$ and \eqref{e7}, we obtain that
$$\n g(\ol x)=\lim_{k\To +\infty} \n g(y_{n_k})=\frac{1}{s}\lim_{k\To +\infty}(y_{n_k}-x_{n_k+1})=$$
$$\frac{1}{s}\lim_{k\To +\infty}\left[(x_{n_k}-x_{n_k+1})+\frac{\b n_k}{n_k+\a}(x_{n_k}-x_{n_k-1})\right]=0.$$
Hence $\omega((x_n)_{n\in\N})\subseteq\crit g.$
Conversely, if $\ol u\in \omega((u_n)_{n\in\N})$ then, from \eqref{e7} results that $\ol u\in \omega((y_n)_{n\in\N})$ and $\ol u\in \omega((x_n)_{n\in\N}).$
Hence,
$$\omega((u_n)_{n\in\N})=\omega((y_n)_{n\in\N})=\omega((x_n)_{n\in\N})\subseteq\crit g.$$

(ii) is nothing else than (ii) in Theorem \ref{decreasing}.

For (iii) observe that  $\n H(x,y)=(\n g(x)+x-y,y-x)$, hence, $\n H(x,y)=0$ leads to
$x=y$ and $\n g(x)=0.$
Consequently
$$\crit H=\{(x,x)\in\R^m\times\R^m: x\in\crit g\}.$$

Further, consider $(\ol y,\ol u)\in \omega((y_n,u_n)_{n\in\N}).$ Then, there exists $(y_{n_k},u_{n_k})_{k\in\N}\subseteq (y_n,u_n)_{n\in\N}$ such that
$$(\ol y,\ol u)=\lim_{k\To+\infty}(y_{n_k},u_{n_k})=\lim_{k\To+\infty}(x_{n_k},x_{n_k})=(\ol x,\ol x).$$
Hence, $\ol u=\ol y=\ol x\in\omega((x_n)_{n\in\N})\subseteq \crit g$ and $(\ol x,\ol x)\in\crit H.$

(iv) By using the 1-norm of $\R^m\times\R^m$ and \eqref{generaldiscrete}, for every $n\in \N$ we have
 $$\|\n H(y_n,u_n)\|\le\|\n H(y_n,u_n)\|_1=\|(\n g(y_n)+y_n-u_n,u_n-y_n)\|_1=\|\n g(y_n)+y_n-u_n\|+\|u_n-y_n\|\le$$
 $$\|\n g(y_n)\|+2\|\sqrt{2\d_n}(x_{n}-x_{n-1})\|=$$
 $$\frac{1}{s}\left\|\left(x_n+\frac{\b n}{n+\a}(x_n-x_{n-1})\right)-x_{n+1}\right\|+2\sqrt{2\d_n}\|x_{n}-x_{n-1}\|\le$$
 $$\frac{1}{s}\|x_{n+1}-x_n\|+\left(\frac{\b  n}{s(n+\a)}+2\sqrt{2\d_n}\right)\|x_{n}-x_{n-1}\|.$$

(v) We use the euclidian norm of $\R^m\times\R^m$, that is $\|(x,y)\|=\sqrt{\|x\|^2+\|y\|^2}$ for all $(x,y)\in\R^m\times\R^m.$
We have:
$$\|\n H(y_n,u_n)\|^2=\|(\n g(y_n)+y_n-u_n,u_n-y_n)\|^2=\|\n g(y_n)+y_n-u_n\|^2+\|u_n-y_n\|^2=$$
$$\left\|\frac1s(x_n-x_{n+1})+\left(\frac{\b n}{s(n+\a)}-\sqrt{2\d_n}\right)(x_n-x_{n-1})\right\|^2+2\d_n\|x_n-x_{n-1}\|^2\le$$
$$\frac{2}{s^2}\|x_{n+1}-x_n\|^2+2\left(\left(\frac{\b n}{s(n+\a)}-\sqrt{2\d_n}\right)^2+\d_n\right)\|x_{n}-x_{n-1}\|^2$$ for all $n\in\N.$

(vi) follows directly from (ii).

 Assume now that $(x_n)_{n\in\N}$ is bounded and let us prove (vii), (see also \cite{BCL-AA}). Obviously $(y_n,u_n)_{n\in\N}$ is bounded, hence according to Weierstrass Theorem $\omega((y_n, u_n)_{n\in\N}),$ (and also $\omega((x_n)_{n\in\N})$), is nonempty. It remains to show that $\omega((y_n, u_n)_{n\in\N})$ is closed. From (i) and the proof of (iii) we have
 \begin{equation}\label{e8}
\omega((y_n, u_n)_{n\in\N})=\{(\ol x,\ol x)\in\R^m\times \R^m: \ol x\in \omega((x_n)_{n\in\N})\}.
\end{equation}
 Hence, it is enough to show that $\omega((x_n)_{n\in\N})$ is closed.

 Let be $(\ol x_p)_{p\in\N}\subseteq \omega((x_n)_{n\in\N})$ and assume that $\lim_{p\To+\infty}\ol x_p=x^*.$ We show that $x^*\in\omega((x_n)_{n\in\N}).$ Obviously, for every $p\in\N$ there exists a sequence of natural numbers $n_k^p\To+\infty,\,k\To+\infty$, such that
$$\lim_{k\To+\infty}x_{n_k^p}=\ol x_p.$$

Let be $\e >0$. Since $\lim_{p\To+\infty}\ol x_p=x^*,$  there exists $P(\e) \in \N$ such that for every $p\ge P(\e)$ it holds
$$\|\ol x_p-x^*\|<\frac\e2.$$
Let $p\in\N$ be fixed. Since $\lim_{k\To+\infty}x_{n_k^p}=\ol x_p,$ there exists $k(p,\e)\in\N$ such that for every $k\ge k(p,\e)$ it holds $$\|x_{n_k^p}-\ol x_p\|<\frac\e2.$$
Let be $k_p \geq k(p,\varepsilon)$ such that $n_{k_p}^p >p$. Obviously $n_{k_p}^p \To \infty$ as $p \To +\infty$ and for every $p \geq P(\e)$
$$\|x_{n_{k_p}^p}-x^*\|<\e.$$
Hence
$$\lim_{p\To+\infty}x_{n_{k_p}^p}=x^*,$$
thus $x^*\in \omega((x_n)_{n\in\N}).$

(viii) By using \eqref{e8} we have
$$\lim_{n\To+\infty} \dist((y_n,u_n),\omega((y_n,u_n)_{n\in\N}))=\lim_{n\To+\infty}\inf_{\ol x\in\omega((x_n)_{n\in\N})}\|(y_n,u_n)-(\ol x,\ol x)\|.$$
Since there exists the subsequences $(y_{n_k})_{k\in\N}$ and $(u_{n_k})_{k\in\N}$ such that
$\lim_{k\To\infty}y_{n_k}=\lim_{k\To\infty}u_{n_k}=\ol x_0\in \omega((x_n)_{n\in\N})$ it is straightforward that
$$\lim_{n\To+\infty} \dist((y_n,u_n),\omega((y_n,u_n)_{n\in\N}))=0.$$
\end{proof}

\begin{remark}\label{r2} We emphasize  that if $g$ is coercive, that is
$\lim_{\|x\|\rightarrow+\infty} g(x)=+\infty,$
then  $g$ is bounded from below and $(x_n)_{n\in\N},\,(y_n)_{n\in\N}$, the sequences generated by \eqref{generaldiscrete}, are bounded.

Indeed, notice that $g$ is bounded from below, being a continuous and coercive function (see \cite{rock-wets}). Note that according to Theorem \ref{decreasing} the sequence $D\sum_{n=N}^r\|x_n-x_{n-1}\|^2$ is convergent hence is bounded. Consequently, from \eqref{forcoercive} it follows that $y_r$ is contained for every $r> N,$ ($N$ is defined in the hypothesis of Theorem \ref{decreasing}), in a lower level set of $g$, which is bounded. Since $(y_n)_{n\in\N}$ is bounded, taking into account \eqref{e7}, it follows that $(x_n)_{n\in\N}$  is also bounded.
\end{remark}

In order to continue our analysis we need the concept of a KL function. For $\eta\in(0,+\infty]$, we denote by $\Theta_{\eta}$
the class of concave and continuous functions $\varphi:[0,\eta)\rightarrow [0,+\infty)$ such that $\varphi(0)=0$, $\varphi$ is
continuously differentiable on $(0,\eta)$, continuous at $0$ and $\varphi'(s)>0$ for all
$s\in(0, \eta)$.

\begin{definition}\label{KL-property} \rm({\it Kurdyka-\L{}ojasiewicz property}) Let $f:\R^n\rightarrow \R$ be a differentiable function. We say that $f$ satisfies the {\it Kurdyka-\L{}ojasiewicz (KL) property} at
$\ol x\in \R^n$
if there exist $\eta \in(0,+\infty]$, a neighborhood $U$ of $\ol x$ and a function $\varphi\in \Theta_{\eta}$ such that for all $x$ in the
intersection
$$U\cap \{x\in\R^n: f(\ol x)<f(x)<f(\ol x)+\eta\}$$ the following inequality holds
$$\varphi'(f(x)-f(\ol x))\|\n f(x))\|\geq 1.$$
If $f$ satisfies the KL property at each point in $\R^n$, then $f$ is called a {\it KL function}.
\end{definition}

The origins of this notion go back to the pioneering work of \L{}ojasiewicz \cite{lojasiewicz1963}, where it is proved that for a
real-analytic function $f:\R^n\rightarrow\R$ and a critical point $\ol x\in\R^n$ (that is $\nabla f(\ol x)=0$), there exists $\theta\in[1/2,1)$ such that the function
$|f-f(\ol x)|^{\theta}\|\nabla f\|^{-1}$ is bounded around $\ol x$. This corresponds to the situation when
$\varphi(s)=C(1-\theta)^{-1}s^{1-\theta}$. The result of \L{}ojasiewicz allows the interpretation of the KL property as a re-parametrization of the function values in order to avoid flatness around the
critical points. Kurdyka \cite{kurdyka1998} extended this property to differentiable functions definable in an o-minimal structure.
Further extensions to the nonsmooth setting can be found in \cite{b-d-l2006, att-b-red-soub2010, b-d-l-s2007, b-d-l-m2010}.

To the class of KL functions belong semi-algebraic, real sub-analytic, semiconvex, uniformly convex and
convex functions satisfying a growth condition. We refer the reader to
\cite{b-d-l2006, att-b-red-soub2010, b-d-l-m2010, b-sab-teb, b-d-l-s2007, att-b-sv2013, attouch-bolte2009} and the references therein  for more details regarding all the classes mentioned above and illustrating examples.

An important role in our convergence analysis will be played by the following uniformized KL property given in \cite[Lemma 6]{b-sab-teb}.

\begin{lemma}\label{unif-KL-property} Let $\Omega\subseteq \R^n$ be a compact set and let $f:\R^n\rightarrow \R$ be a differentiable function. Assume that $f$ is constant on $\Omega$ and $f$ satisfies the KL property at each point of $\Omega$.
Then there exist $\varepsilon,\eta >0$ and $\varphi\in \Theta_{\eta}$ such that for all $\ol x\in\Omega$ and for all $x$ in the intersection
\begin{equation}\label{int} \{x\in\R^n: \dist(x,\Omega)<\varepsilon\}\cap \{x\in\R^n: f(\ol x)<f(x)<f(\ol x)+\eta\}\end{equation}
the following inequality holds \begin{equation}\label{KL-ineq}\varphi'(f(x)-f(\ol x))\|\n f(x)\|\geq 1.\end{equation}
\end{lemma}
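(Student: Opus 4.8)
The statement to prove is Lemma~\ref{unif-KL-property}, the uniformized Kurdyka--\L{}ojasiewicz property. This is a classical result from \cite[Lemma 6]{b-sab-teb}, so the natural plan is a standard compactness argument combining the pointwise KL property at each point of $\Omega$ with the constancy of $f$ on $\Omega$.

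First I would normalize the situation: since $f$ is constant on $\Omega$, call its common value $f^*$, and replace the KL inequality $\varphi'(f(x)-f(\ol x))\|\n f(x)\|\ge 1$ by $\varphi'(f(x)-f^*)\|\n f(x)\|\ge 1$ for every $\ol x\in\Omega$. For each $\ol x\in\Omega$, the pointwise KL property furnishes a radius $\e_{\ol x}>0$, a height $\eta_{\ol x}>0$, and a desingularizing function $\varphi_{\ol x}\in\Theta_{\eta_{\ol x}}$ such that the KL inequality holds on $B(\ol x,\e_{\ol x})\cap\{f^*<f<f^*+\eta_{\ol x}\}$. The open balls $B(\ol x,\e_{\ol x}/2)$ cover the compact set $\Omega$, so I would extract a finite subcover indexed by $\ol x_1,\dots,\ol x_p$. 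Set $\e:=\min_i \e_{\ol x_i}/2$ and $\eta:=\min_i\eta_{\ol x_i}$, and define the candidate desingularizing function by $\varphi:=\sum_{i=1}^p\varphi_{\ol x_i}$ (restricted to $[0,\eta)$), which is again concave, continuous on $[0,\eta)$, continuously differentiable on $(0,\eta)$, vanishes at $0$, and has strictly positive derivative there — hence $\varphi\in\Theta_\eta$.

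Next I would verify the conclusion. Fix any $\ol x\in\Omega$ and any $x$ in the intersection \eqref{int}, i.e.\ $\dist(x,\Omega)<\e$ and $f^*<f(x)<f^*+\eta$. Since $\dist(x,\Omega)<\e$, there is a point of $\Omega$ within distance $\e$ of $x$, and since that point lies in some ball $B(\ol x_i,\e_{\ol x_i}/2)$ of the finite subcover, the triangle inequality gives $\|x-\ol x_i\|<\e+\e_{\ol x_i}/2\le\e_{\ol x_i}$, so $x\in B(\ol x_i,\e_{\ol x_i})$. Moreover $f^*<f(x)<f^*+\eta\le f^*+\eta_{\ol x_i}$ and $f(\ol x_i)=f^*$, so $x$ lies in the set where the pointwise KL inequality for $\ol x_i$ is valid: $\varphi_{\ol x_i}'(f(x)-f^*)\|\n f(x)\|\ge 1$. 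Finally, because each $\varphi_{\ol x_j}'>0$ on $(0,\eta)$, we have $\varphi'(f(x)-f^*)=\sum_{j=1}^p\varphi_{\ol x_j}'(f(x)-f^*)\ge\varphi_{\ol x_i}'(f(x)-f^*)$, whence $\varphi'(f(x)-f^*)\|\n f(x)\|\ge 1$, which is \eqref{KL-ineq} since $f(\ol x)=f^*$.

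The only mildly delicate points are bookkeeping ones: ensuring that summing finitely many functions from the classes $\Theta_{\eta_{\ol x_i}}$, after restricting to the common interval $[0,\eta)$, indeed produces an element of $\Theta_\eta$ (concavity and differentiability are preserved under finite sums, continuity at $0$ and $\varphi(0)=0$ are immediate, and positivity of the derivative is inherited from any single summand), and handling the shrink-by-half device $\e_{\ol x}\mapsto\e_{\ol x}/2$ so that a point at distance $<\e$ from $\Omega$ still falls inside a full ball $B(\ol x_i,\e_{\ol x_i})$. I do not expect any genuine obstacle here; the argument is the routine ``pointwise property plus compactness yields uniform property'' pattern, and the role of the hypothesis that $f$ be constant on $\Omega$ is precisely to let a single threshold $\eta$ and a single reference value $f^*$ work simultaneously for all base points $\ol x\in\Omega$.
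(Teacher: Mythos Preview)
Your argument is correct and is precisely the standard compactness proof of the uniformized KL property from \cite[Lemma~6]{b-sab-teb}. Note, however, that the paper does not give its own proof of Lemma~\ref{unif-KL-property}: it is stated as a known result and attributed to \cite{b-sab-teb}, so there is nothing to compare against beyond observing that your proposal reproduces the original argument from that reference.
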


The following convergence result is the  first main result of the paper.

\begin{theorem}\label{convergence} In the settings of problem \eqref{opt-pb}, for some starting points $x_0=y_0\in\R^m$ consider the sequences $(x_n)_{n\in\N},\,(y_n)_{n\in\N}$ generated by Algorithm \eqref{generaldiscrete}. Assume that $g$ is bounded from below and consider the function
$$H:\R^m\times\R^m\To\R,\,H(x,y)=g(x)+\frac12\|y-x\|^2.$$
Assume that $(x_n)_{n\in\N}$ is bounded and $H$ is a KL function. Then the following statements are true
\begin{itemize}
\item[(a)] $\sum_{n\ge 1}\|x_n-x_{n-1}\|<+\infty;$
\item[(b)] there exists ${x}\in\crit(g)$ such that $\lim_{n\To+\infty}x_n={x}.$
\end{itemize}
\end{theorem}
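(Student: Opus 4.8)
The plan is to run the standard Kurdyka--\L{}ojasiewicz argument (in the spirit of \cite{att-b-sv2013, b-sab-teb}) applied to the regularization $H$ evaluated along the sequence $(y_n,u_n)_{n\in\N}$. The crucial link with Theorem \ref{decreasing} is the identity
$$H(y_n,u_n)=g(y_n)+\tfrac12\|u_n-y_n\|^2=g(y_n)+\d_n\|x_n-x_{n-1}\|^2,$$
so that the decreasing and convergent sequence furnished by Theorem \ref{decreasing}(i)--(ii) is exactly $(H(y_n,u_n))_{n\in\N}$. Set $\ol H:=\lim_{n\To+\infty}H(y_n,u_n)$ and $a_n:=H(y_n,u_n)-\ol H$; then $a_n\ge 0$ and $a_n$ is nonincreasing for $n\ge N$, and $a_n\To 0$. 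I would first dispose of the degenerate case: if $a_{n_0}=0$ for some $n_0\ge N$, monotonicity forces $a_n=0$ for all $n\ge n_0$, and inequality \eqref{ineq} then gives $\|x_n-x_{n-1}\|=0$ for all $n>n_0$, so $(x_n)_{n\in\N}$ is eventually constant and (a)--(b) hold trivially, with the limit in $\crit g$ by Lemma \ref{regularization}(i). Hence assume $a_n>0$ for all $n\ge N$.

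Next, apply the uniformized KL property (Lemma \ref{unif-KL-property}) to $\Omega:=\omega((y_n,u_n)_{n\in\N})$, which is compact by Lemma \ref{regularization}(vii) and on which $H$ is constant, the constant being $\ol H$ by Lemma \ref{regularization}(vi) together with continuity of $H$ along a convergent subsequence. This produces $\e,\eta>0$ and $\varphi\in\Theta_\eta$. Since $\dist((y_n,u_n),\Omega)\To 0$ by Lemma \ref{regularization}(viii) and $a_n\downarrow 0$ with $a_n>0$, there is $\ol n\ge N$ such that for all $n\ge\ol n$ the pair $(y_n,u_n)$ lies in the intersection \eqref{int} (with $\ol x$ any point of $\Omega$), whence
$$\varphi'(a_n)\,\|\n H(y_n,u_n)\|\ge 1,\qquad n\ge\ol n.$$

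Now I would combine, for $n\ge\ol n$: concavity of $\varphi$, which gives $\varphi(a_n)-\varphi(a_{n+1})\ge\varphi'(a_n)(a_n-a_{n+1})$; inequality \eqref{ineq}, which gives $a_n-a_{n+1}\ge D\|x_n-x_{n-1}\|^2$; and Lemma \ref{regularization}(iv), which gives $\|\n H(y_n,u_n)\|\le a\|x_{n+1}-x_n\|+b\|x_n-x_{n-1}\|$ with constants $a:=1/s$ and $b:=\sup_n(\tfrac{\b n}{s(n+\a)}+2\sqrt{2\d_n})<+\infty$ (finite because $\d_n$ and $\tfrac{\b n}{n+\a}$ converge). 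Writing $M_n:=\varphi(a_n)-\varphi(a_{n+1})\ge 0$, these combine to
$$D\,\|x_n-x_{n-1}\|^2\le M_n\bigl(a\|x_{n+1}-x_n\|+b\|x_n-x_{n-1}\|\bigr).$$
Taking square roots, using $\sqrt{uv}\le\tfrac{\kappa}{2}u+\tfrac{1}{2\kappa}v$ with a parameter $\kappa>0$, and summing from $\ol n$ to $r$, the right-hand side becomes a telescoping term $\tfrac{\kappa}{2D}\sum_{n=\ol n}^r M_n\le\tfrac{\kappa}{2D}\varphi(a_{\ol n})$ (since $\varphi\ge 0$) plus $\tfrac{a}{2\kappa}\sum_{n=\ol n}^r\|x_{n+1}-x_n\|+\tfrac{b}{2\kappa}\sum_{n=\ol n}^r\|x_n-x_{n-1}\|$; a shift of index bounds the first of these last two sums by $\sum_{n=\ol n}^r\|x_n-x_{n-1}\|+\|x_{r+1}-x_r\|$, and $\|x_{r+1}-x_r\|$ is bounded by Remark \ref{r1}. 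Choosing $\kappa>\tfrac{a+b}{2}$ lets one absorb the $\sum\|x_n-x_{n-1}\|$ terms into the left-hand side, yielding $\sum_{n=\ol n}^r\|x_n-x_{n-1}\|\le\mathrm{const}$ uniformly in $r$; letting $r\To+\infty$ proves (a). Finally, (a) shows $(x_n)_{n\in\N}$ is Cauchy, hence convergent to some $x$; boundedness of $(x_n)$ gives $x\in\omega((x_n)_{n\in\N})\subseteq\crit g$ by Lemma \ref{regularization}(i), which is (b). The only delicate point is the absorption/bookkeeping step: one must track the boundary term coming from the index shift in $\sum\|x_{n+1}-x_n\|$ and fix $\kappa$ so that the coefficient of $\sum\|x_n-x_{n-1}\|$ on the right stays strictly below $1$; everything else is routine.
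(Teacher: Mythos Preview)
Your proposal is correct and follows essentially the same route as the paper: identify $H(y_n,u_n)=g(y_n)+\d_n\|x_n-x_{n-1}\|^2$, dispose of the trivial case, invoke the uniform KL lemma on $\Omega=\omega((y_n,u_n))$, and then combine the sufficient decrease \eqref{ineq}, the gradient bound of Lemma~\ref{regularization}(iv), and concavity of $\varphi$ to force $\sum\|x_n-x_{n-1}\|<\infty$. The only cosmetic differences are that the paper first replaces $a,b$ by $M:=\max(1/s,\,\sup_{n\ge N}(\tfrac{\b n}{s(n+\a)}+2\sqrt{2\d_n}))$ and then applies the arithmetic--geometric mean inequality with the fixed split $1/3$, obtaining the one-step relation $2\|x_n-x_{n-1}\|-\|x_{n+1}-x_n\|\le\frac{9M}{4D}(\varphi(a_n)-\varphi(a_{n+1}))$ and telescoping it, whereas you keep $a,b$ separate, introduce a free Young parameter $\kappa$, sum first, shift index, and absorb by choosing $\kappa>\tfrac{a+b}{2}$; both bookkeepings are standard and equivalent.
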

\begin{proof} Consider the sequence
$$u_n=\sqrt{2\d_n}(x_{n}-x_{n-1})+y_n\mbox{, for all }n\in \N,$$
that  was defined in the hypotheses of Lemma \ref{regularization}. Furthermore, consider $(\ol x,\ol x)\in\omega((y_n,u_n)_{n\in\N}).$

Then, according to Lemma \ref{regularization}, the sequence $H(y_n,u_n)$ is decreasing for all $n\ge N$,  where $N$ was defined in Theorem \ref{decreasing}, further
$$\ol x\in\crit g\mbox{ and  }\lim_{n\To+\infty}H(y_n,u_n)=H(\ol x,\ol x).$$
We divide the proof into two cases.

{\bf Case I.} There exists $\ol n\ge N,\,\ol n\in\N,$  such that $H(y_{\ol n},u_{\ol n})=H(\ol x,\ol x).$ Then, since $H(y_n,u_n)$ is decreasing for all $n\ge N$ and $\lim_{n\To+\infty}H(y_n,u_n)=H(\ol x,\ol x)$ we obtain that
$$H(y_n,u_n)=H(\ol x,\ol x)\mbox{ for all }n\ge\ol n.$$

The latter relation combined with \eqref{ineq} leads to
$$0\le D\|x_n-x_{n-1}\|^2\le H(y_n,u_n)-H(y_{n+1},u_{n+1})=H(\ol x,\ol x)-H(\ol x,\ol x)=0$$
for all $n\ge\ol n.$

Hence $(x_n)_{n\ge\ol n}$ is constant and the conclusion follows.

{\bf Case II.} For every $n\ge N$ one has that $H(y_n,u_n)> H(\ol x,\ol x).$
Let $\Omega=\omega((y_n,u_n)_{n\in\N}).$ Then according to Lemma \ref{regularization}, $\Omega$ is nonempty and compact and $H$ is constant on $\Omega.$
Since $H$ is KL, according to Lemma \ref{unif-KL-property}  there exist $\varepsilon,\eta >0$ and $\varphi\in \Theta_{\eta}$ such that for all  $(z,w)$ belonging to the intersection
$$ \{(z,w)\in\R^n\times\R^n: \dist((z,w),\Omega)<\varepsilon\}\cap \{(z,w)\in\R^n\times\R^n: H(\ol x,\ol x)<H(z,w)<H(\ol x,\ol x)+\eta\}$$
one has
$$\varphi'(H(z,w)-H(\ol x,\ol x))\|\n H(z,w)\|\geq 1.$$

Since $\lim_{n\To+\infty}\dist((y_n,u_n),\Omega)=0$, there exists $n_1\in\N$ such that
$$\dist((y_n,u_n),\Omega)<\e,\,\forall n\ge n_1.$$

Since $$\lim_{n\To+\infty}H(y_n,u_n)=H(\ol x,\ol x)$$
and
$$H(y_n,u_n))> H(\ol{x},\ol{x})\mbox{ for all }n\ge N,$$
there exists $n_2\ge N$ such that
$$H(\ol x,\ol x)<H(y_n,u_n)< H(\ol{x},\ol{x})+\eta,\,\forall n\ge n_2.$$

Hence, for all $n\ge \ol n=\max(n_1,n_2)$ we have
$$\varphi'(H(y_n,u_n)-H(\ol{x},\ol{x}))\cdot\|\n H(y_n,u_n))\|\ge 1.$$
Since $\varphi$ is concave, for all $n\in\N$ we have
$$\varphi(H(y_n,u_n)-H(\ol{x},\ol{x}))-\varphi(H(y_{n+1},u_{n+1})-H(\ol{x},\ol{x}))\ge$$
$$\varphi'(H(y_n,u_n)-H(\ol{x},\ol{x}))\cdot(H(y_n,u_n)-H(y_{n+1},u_{n+1})),$$
hence,
$$\varphi(H(y_n,u_n)-H(\ol{x},\ol{x}))-\varphi(H(y_{n+1},u_{n+1})-H(\ol{x},\ol{x}))\ge \frac{H(y_n,u_n)-H(y_{n+1},u_{n+1})}{\|\n H(y_n,u_n)\|}$$
for all $n\ge\ol n.$

 Now, from \eqref{ineq} and Lemma \ref{regularization} (iv) we obtain
 \begin{equation}\label{e9}
 \varphi(H(y_n,u_n)-H(\ol{x},\ol{x}))-\varphi(H(y_{n+1},u_{n+1})-H(\ol{x},\ol{x}))\ge
 \end{equation}
 $$\frac{D\|x_{n}-x_{n-1}\|^2}{\frac{1}{s}\|x_{n+1}-x_n\|+\left(\frac{\b  n}{s(n+\a)}+2\sqrt{2\d_n}\right)\|x_{n}-x_{n-1}\|},$$
 for all $n\ge\ol n.$
Since $\lim_{n\To+\infty}\d_n=\frac{2-\b^2-s L_g(\b+1)}{2s}>0$ and $\lim_{n\To+\infty}\frac{\b  n}{s(n+\a)}=\frac{\b }{s}\ge 0$ there exists $\ol N\in\N,\,\ol N\ge\ol n$ and $M>0$ such that
$$\max\left(\frac1s,\frac{\b  n}{s(n+\a)}+2\sqrt{2\d_n}\right)\le M,$$
for all $n\ge \ol N.$
Hence, \eqref{e9} becomes
 \begin{equation}\label{e10}
 \varphi(H(y_n,u_n)-H(\ol{x},\ol{x}))-\varphi(H(y_{n+1},u_{n+1})-H(\ol{x},\ol{x}))\ge
 \end{equation}
 $$\frac{D\|x_{n}-x_{n-1}\|^2}{M(\|x_{n+1}-x_n\|+\|x_{n}-x_{n-1}\|)},$$
 for all $n\ge\ol N.$

Consequently,
$$\|x_{n}-x_{n-1}\|\le$$
$$\sqrt{\frac{M}{D}(\varphi(H(y_n,u_n)-H(\ol{x},\ol{x}))-\varphi(H(y_{n+1},u_{n+1})-H(\ol{x},\ol{x})))\cdot(\|x_{n+1}-x_n\|+\|x_{n}-x_{n-1}\|)},$$
 for all $n\ge\ol N.$

By using the arithmetical-geometrical mean inequality we have
$$\sqrt{\frac{M}{D}(\varphi(H(y_n,u_n)-H(\ol{x},\ol{x}))-\varphi(H(y_{n+1},u_{n+1})-H(\ol{x},\ol{x})))\cdot(\|x_{n+1}-x_n\|+\|x_{n}-x_{n-1}\|)}\le$$
$$\frac{\|x_{n+1}-x_n\|+\|x_{n}-x_{n-1}\|}{3}+\frac{3M}{4D}(\varphi(H(y_n,u_n)-H(\ol{x},\ol{x}))-\varphi(H(y_{n+1},u_{n+1})-H(\ol{x},\ol{x})))$$
 for all $n\ge\ol N.$
Hence
$$\|x_{n}-x_{n-1}\|\le$$
$$\frac{\|x_{n+1}-x_n\|+\|x_{n}-x_{n-1}\|}{3}+\frac{3M}{4D}(\varphi(H(y_n,u_n)-H(\ol{x},\ol{x}))-\varphi(H(y_{n+1},u_{n+1})-H(\ol{x},\ol{x})))$$
 for all $n\ge\ol N$ which leads to
 \begin{equation}\label{e11}
 2\|x_{n}-x_{n-1}\|-\|x_{n+1}-x_{n}\|\le\frac{9M}{4D}(\varphi(H(y_n,u_n)-H(\ol{x},\ol{x}))-\varphi(H(y_{n+1},u_{n+1})-H(\ol{x},\ol{x})))
 \end{equation}
 for all $n\ge\ol N.$ Let $P>\ol N$. By summing up \eqref{e11} from $\ol N$ to $P$ we obtain
 $$\sum_{n={\ol N}}^P \|x_{n}-x_{n-1}\|\le$$
  $$-\|x_{\ol N}-x_{\ol N-1}\|+\|x_{P+1}-x_P\|+\frac{9M}{4D}(\varphi(H(y_{\ol N},u_{\ol N})-H(\ol{x},\ol{x}))-\varphi(H(y_{P+1},u_{P+1})-H(\ol{x},\ol{x}))).$$
Now, by letting $P\To+\infty$ and using the fact that $\varphi(0)=0$ and \eqref{e7} we obtain that
$$\sum_{n={\ol N}}^\infty \|x_{n}-x_{n-1}\|\le-\|x_{\ol N}-x_{\ol N-1}\|+\frac{9M}{4D}\varphi(H(y_{\ol N},u_{\ol N})-H(\ol{x},\ol{x}))<+\infty,$$
hence
$$\sum_{n\ge 1}\|x_n-x_{n-1}\|<+\infty$$ which is exactly (a).

Obviously the sequence $S_n=\sum_{k=1}^n\|x_k-x_{k-1}\|$ is Cauchy, hence, for all $\e>0$ there exists $N_{\e}\in \N$   such that for all $n\ge N_\e$ and for all $p\in \N$ one has
$$S_{n+p}-S_n\le \e.$$
But
$$S_{n+p}-S_n=\sum_{k={n+1}}^{n+p}\|x_k-x_{k-1}\|\ge \left\|\sum_{k={n+1}}^{n+p}(x_k-x_{k-1})\right\|=\|x_{n+p}-x_n\|$$
hence the sequence $(x_n)_{n\in\N}$ is Cauchy, consequently is convergent. Let
$$\lim_{n\To+\infty}x_n=x.$$
Now, according to Lemma \ref{regularization} (i) one has
$$\{x\}=\omega((x_n)_{n\in\N})\subseteq\crit g$$ which proves (b).
\end{proof}

\begin{remark} Since the class of semi-algebraic functions is closed under addition (see for example \cite{b-sab-teb}) and
$(x,y) \mapsto \frac12\|x-y\|^2$ is semi-algebraic, the conclusion of the
previous theorem holds if the condition $H$ is a KL function is replaced by the assumption that $g$ is semi-algebraic.
\end{remark}

\begin{remark} Note that, according to Remark \ref{r2}, the conclusion of Theorem \ref{convergence} remains valid if we replace in its hypotheses the conditions that $g$ is bounded from below and $(x_n)_{n\in\N}$ is bounded by the condition that $g$ is coercive.
\end{remark}

\begin{remark} Note that under the assumptions of Theorem \ref{convergence} we have $\lim_{n\To+\infty}y_n= x$ and
$$\lim_{n\To+\infty}g(x_n)=\lim_{n\To+\infty}g(y_n)=g(x).$$
\end{remark}

\section{Convergence rates}\label{sec5}

In this section we will assume that the regularized function $H$ satisfies the Lojasiewicz property, which, as noted in the previous section, corresponds to a particular choice of the desingularizing function $\varphi$ (see \cite{lojasiewicz1963, b-d-l2006, attouch-bolte2009}).

\begin{definition}\label{Lexpo}  Let $f:\R^n\To \R $ be a differentiable function. The function $f$ is said to fulfill the {\L}ojasiewicz property, if for every $\ol x\in\crit{f}$ there exist $K,\e>0$ and $\t\in(0,1)$ such that
$$|f(x)-f(\ol x)|^\t\le K\|\n f(x)\|\mbox{ for every }x\mbox{ fulfilling }\|x-\ol x\|<\e.$$
The number $\t$  is called the {\L}ojasiewicz exponent of $f$ at the critical  point $\ol x.$ This corresponds to the case when the desingularizing function $\varphi$ has the form $\varphi(t)=\frac{K}{1-\t} t^{1-\t}.$
\end{definition}

In the following theorems we provide convergence rates for the sequence generated by \eqref{generaldiscrete}, but also for the function values, in terms of the {\L}ojasiewicz exponent of $H$ (see, also, \cite{b-d-l2006, attouch-bolte2009}). Note that the forthcoming results remain valid if one  replace in their hypotheses the conditions that $g$ is bounded from below and $(x_n)_{n\in\N}$ is bounded by the condition that $g$ is coercive.

\begin{theorem}\label{th-conv-rate}  In the settings of problem \eqref{opt-pb} consider the sequences $(x_n)_{n\in\N},\,(y_n)_{n\in\N}$ generated by Algorithm \eqref{generaldiscrete}. Assume that $g$ is bounded from below and that $(x_n)_{n\in\N}$ is bounded, let $\ol x\in\crit(g)$ be such that $\lim_{n\To+\infty}x_n=\ol x$ and suppose that
$$H:\R^n\times \R^n\To\R,\,H(x,y)=g(x)+\frac{1}{2}\|x-y\|^2$$
fulfills the {\L}ojasiewicz property at $(\ol x,\ol x)\in\crit H$ with {\L}ojasiewicz exponent  $\t\in\left(0,\frac12\right].$ Then, for every $p>0$ there exist $a_1,a_2,a_3,a_4>0$ and $\ol k\in\N$ such that the following statements hold true:
\begin{itemize}
\item[$(\emph{a}_1)$] $g(y_{n})-g(\ol x)\le a_1 \frac{1}{n^p}$ for  every $n> \ol k$,
\item[$(\emph{a}_2)$] $g(x_{n})-g(\ol x)\le a_2\frac{1}{n^p}$ for  every $n>\ol k$,
\item[$(\emph{a}_3)$] $\|x_n-\ol x\|\le a_3 \frac{1}{n^{\frac{p}{2}}}$ for  every $n>\ol k$,
\item[$(\emph{a}_4)$] $\|y_n-\ol x\|\le a_4 \frac{1}{n^{\frac{p}{2}}}$ for all  $n>\ol k$.
\end{itemize}
\end{theorem}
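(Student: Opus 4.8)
The plan is to reduce all four estimates to a single geometric‑decay statement for the energy $h_n:=H(y_n,u_n)-g(\ol x)=g(y_n)+\d_n\|x_n-x_{n-1}\|^2-g(\ol x)$, which by Theorem~\ref{decreasing} and Lemma~\ref{regularization} is nonnegative and nonincreasing for $n$ large and satisfies $h_n\to 0$. I would first collect three structural inequalities from Section~2: the descent estimate $D\|x_n-x_{n-1}\|^2\le h_n-h_{n+1}$ of \eqref{ineq}; the subgradient bound $\|\n H(y_n,u_n)\|\le M(\|x_n-x_{n-1}\|+\|x_{n+1}-x_n\|)$ for $n$ large, from Lemma~\ref{regularization}(iv) and the convergence of $\d_n$ and $\tfrac{\b n}{n+\a}$; and, by repeating the telescoping of \eqref{e11} in the proof of Theorem~\ref{convergence} with $\varphi(t)=\tfrac{K}{1-\t}t^{1-\t}$, the length estimate $r_n\le\tfrac{9M}{4D}\varphi(h_n)$, where $r_n:=\sum_{k\ge n}\|x_k-x_{k-1}\|<+\infty$. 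Since the given exponent is $\le\tfrac12$, on a possibly smaller neighbourhood of $(\ol x,\ol x)$ (using that $H$ is continuous with $H(z,w)\to H(\ol x,\ol x)$ there) it is also a valid {\L}ojasiewicz exponent equal to $\tfrac12$, so I would take $\t=\tfrac12$; then $r_n\le C_1h_n^{1/2}$ and, combining the {\L}ojasiewicz inequality at $(\ol x,\ol x)$ with the subgradient bound, $h_n^{1/2}\le KM(\|x_n-x_{n-1}\|+\|x_{n+1}-x_n\|)$ for $n$ large. I would also record $\|x_n-\ol x\|\le r_{n+1}\le r_n$ and, applying the descent Lemma~\ref{desc} to $x_n=y_{n-1}-s\n g(y_{n-1})$ with $s<\tfrac{2(1-\b)}{L_g}<\tfrac{2}{L_g}$, the inequality $g(x_n)\le g(y_{n-1})$.

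Next I would split into the same two cases as in the proof of Theorem~\ref{convergence}, according as $H(y_n,u_n)$ reaches the value $g(\ol x)$ in finite time or stays strictly above it. In Case~I there is $\ol n\ge N$ with $H(y_{\ol n},u_{\ol n})=g(\ol x)$; then $D\|x_n-x_{n-1}\|^2\le h_n-h_{n+1}=0$ for $n\ge\ol n$, so $x_n$ is eventually constant, hence equal to $\ol x$, whence $y_n=\ol x$ for $n>\ol n$ and all four quantities vanish for $n$ large; the statement holds with $\ol k=\ol n$ and arbitrary positive constants. In Case~II one has $h_n>0$ for every $n\ge N$, and this is where the argument has content.

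In Case~II the key step is to turn two consecutive descent inequalities into a two‑step geometric recursion for $h_n$. Adding $D\|x_n-x_{n-1}\|^2\le h_n-h_{n+1}$ to $D\|x_{n+1}-x_n\|^2\le h_{n+1}-h_{n+2}$ and using $u^2+v^2\ge\tfrac12(u+v)^2$ together with $\|x_n-x_{n-1}\|+\|x_{n+1}-x_n\|\ge(KM)^{-1}h_n^{1/2}$ gives $h_n-h_{n+2}\ge\tfrac{D}{2(KM)^2}h_n=:c\,h_n$ for all large $n$; since $h_{n+2}>0$ this forces $c\in(0,1)$, hence $h_{n+2}\le(1-c)h_n$, and iterating along the even and odd indices yields $h_n\le C_5\rho^{\,n}$ with $\rho:=\sqrt{1-c}\in(0,1)$. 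The four rates then follow because geometric decay beats every polynomial one: $g(y_n)-g(\ol x)=h_n-\d_n\|x_n-x_{n-1}\|^2\le h_n\le C_5\rho^{\,n}$ gives $(a_1)$; $g(x_n)-g(\ol x)\le g(y_{n-1})-g(\ol x)\le h_{n-1}\le C_5\rho^{\,n-1}$ gives $(a_2)$; $\|x_n-\ol x\|\le r_n\le C_1h_n^{1/2}\le C_1C_5^{1/2}\rho^{\,n/2}$ gives $(a_3)$; and $\|y_n-\ol x\|\le\|y_n-x_n\|+\|x_n-\ol x\|\le\|x_n-x_{n-1}\|+C_1h_n^{1/2}\le(D^{-1/2}+C_1)C_5^{1/2}\rho^{\,n/2}$ gives $(a_4)$. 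Thus for any prescribed $p>0$, since $n^{p}\rho^{\,n}$ and $n^{p/2}\rho^{\,n/2}$ are bounded, one obtains $(a_1)$–$(a_4)$ with suitable constants $a_1,\dots,a_4$ and a common threshold $\ol k$.

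The main obstacle is the inertial term: because $\|\n H(y_n,u_n)\|$ is controlled only by the \emph{sum} $\|x_n-x_{n-1}\|+\|x_{n+1}-x_n\|$ of two consecutive increments — rather than by a single increment as for plain gradient descent — the usual one‑step {\L}ojasiewicz recursion $h_{n+1}\le(1-c)h_n$ is unavailable, and one must build the two‑step recursion above; checking that its constant $c$ is genuinely $<1$ (so the recursion contracts rather than degenerates) is exactly where the strict positivity $h_{n+2}>0$ of Case~II is used, while the reduction to $\t=\tfrac12$ is what makes the exponents in $r_n\le C_1h_n^{1-\t}$ and $h_n^{\t}\le KM(\cdots)$ match, so that geometric decay of $h_n$ transfers to $(x_n)$ and $(y_n)$. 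The remaining work is bookkeeping — choosing $\ol k$ large enough that the asymptotic estimates for $\d_n$, $\tfrac{\b n}{n+\a}$ and the {\L}ojasiewicz neighbourhood of $(\ol x,\ol x)$ are simultaneously valid.
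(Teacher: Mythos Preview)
Your argument is correct and takes a genuinely different route from the paper. The paper works with the three-term recursion $r_n-r_{n+1}\ge\alpha_n r_{n+1}^{2\theta}-\beta_n(r_{n+1}-r_{n+2})$ (obtained from Lemma~\ref{regularization}(v) and the {\L}ojasiewicz inequality), linearizes via $r_{n+1}^{2\theta}\ge r_{n+1}$, and then introduces an auxiliary sequence $\Xi_n=\beta_n n^p/((n+1)^p-n^p)$ to unfold this into the telescoping product $\prod_k(1+\beta_{k+1}/\Xi_{k+1})^{-1}=(\ol k+1)^p/(n+2)^p$, thereby proving $r_n\le a_1 n^{-p}$ for the prescribed $p$; the paper then remarks explicitly that within this scheme (any choice of $\rho_n\to0$) exponential decay is unattainable. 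You instead add two consecutive descent inequalities and combine $u^2+v^2\ge\tfrac12(u+v)^2$ with the {\L}ojasiewicz bound $h_n^{1/2}\le KM(\|x_n-x_{n-1}\|+\|x_{n+1}-x_n\|)$ to obtain the clean two-step contraction $h_{n+2}\le(1-c)h_n$, i.e.\ genuine geometric decay of $h_n$; this is both simpler and strictly stronger than the paper's polynomial decay of arbitrary order, and it sidesteps the machinery with $\Xi_n$ entirely. Your treatment of $(a_2)$ via $g(x_n)\le g(y_{n-1})$ (from the descent lemma applied to the gradient step $x_n=y_{n-1}-s\nabla g(y_{n-1})$ with $sL_g<2$) is also more direct than the paper's route through \eqref{ee1}. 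The reduction to $\theta=\tfrac12$ is legitimate since on a neighbourhood where $|H-H(\ol x,\ol x)|<1$ one has $|H-H(\ol x,\ol x)|^{1/2}\le|H-H(\ol x,\ol x)|^{\theta}$, and your observation that $c<1$ is forced by $h_{n+2}>0$ in Case~II is correct.
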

\begin{proof}
 As we have seen in the proof of Theorem \ref{convergence},  if there exists $\ol n\ge N,\,\ol n\in\N$, (where $N$ was defined in Theorem \ref{decreasing}), such that $H(y_{\ol n},u_{\ol n})=H(\ol x,\ol x),$ then,
$$H(y_n,u_n)=H(\ol x,\ol x)\mbox{ for all }n\ge\ol n$$
and $(x_n)_{n\ge\ol n}$ is constant. Consequently $(y_n)_{n\ge\ol n}$ is also constant and the conclusion of the theorem is straightforward.

Hence, in what follows we assume  that $H(y_{n},u_{ n})>H(\ol x,\ol x),$ for all $n\ge N.$
\\

Let us fix $p>0$ and let us  prove (a$_1$).

 For simplicity let us denote $r_n=H(y_n,u_n)-H(\ol x,\ol x)>0$  for all $n\in\N.$
 From \eqref{e6} we have
$$\Delta_n\|x_n-x_{n-1}\|^2\le r_n-r_{n+1}\mbox{ for all }n\ge N.$$

From Lemma \ref{regularization} (v) we have
 $$\|\n H(y_n,u_n)\|^2\le \frac{2}{s^2}\|x_{n+1}-x_n\|^2+2\left(\left(\frac{\b n}{s(n+\a)}-\sqrt{2\d_n}\right)^2+\d_n\right)\|x_{n}-x_{n-1}\|^2$$ for all $n\in\N$.
  Let $S_n=2\left(\left(\frac{\b n}{s(n+\a)}-\sqrt{2\d_n}\right)^2+\d_n\right),$ for all $n\in\N.$

It follows that, for all $n\ge N$ one has
$$\|x_n-x_{n-1}\|^2\ge \frac{1}{S_n}\|\n H(y_n,u_n)\|^2-\frac{2}{s^2S_n}\|x_{n+1}-x_n\|^2\ge$$
$$\frac{1}{S_n}\|\n H(y_n,u_n)\|^2-\frac{2}{s^2S_n\Delta_{n+1}}(r_{n+1}-r_{n+2}).$$

Now by using the {\L}ojasiewicz property of $H$ at $(\ol x,\ol x)\in\crit H$, and the fact that $\lim_{n\To+\infty}(y_n,u_n)=(\ol x,\ol x)$,  we obtain that there exists  $K,\e>0$ and $\ol N_1\in\N,$ such that for all $n\ge \ol N_1$ one has
$$\|(y_n,u_n)-(\ol x,\ol x)\|<\e,$$
consequently
\begin{equation}\label{e111} r_n-r_{n+1}\ge \frac{\Delta_n}{S_n}\|\n H(y_n,u_n)\|^2-\frac{2\Delta_n}{s^2S_n\Delta_{n+1}}(r_{n+1}-r_{n+2})\ge
\end{equation}
$$\frac{\Delta_n}{K^2 S_n}r_n^{2\t}-\frac{2\Delta_n}{s^2S_n\Delta_{n+1}}(r_{n+1}-r_{n+2})\ge$$
$$\frac{\Delta_n}{K^2 S_n}r_{n+1}^{2\t}-\frac{2\Delta_n}{s^2S_n\Delta_{n+1}}(r_{n+1}-r_{n+2})=\a_n r_{n+1}^{2\t}-\b_n(r_{n+1}-r_{n+2}),$$
where $\a_n= \frac{\Delta_n}{K^2 S_n}\mbox{ and  }\b_n=\frac{2\Delta_n}{s^2S_n\Delta_{n+1}}.$

It is obvious that the sequences $(\a_n)_{n\ge\ol N_1}$ and $(\b_n)_{n\ge\ol N_1}$ are convergent, further
$$\lim_{n\To+\infty} \a_n>0\mbox{ and } \lim_{n\To+\infty} \b_n>0.$$
Now, since $0<2\t\le1$ and $r_{n+1}\To 0$, there exists $\ol N_2\in\N,\, \ol N_2\ge \ol N_1,$ such that $r_{n+1}^{2\t}\ge r_{n+1}$ for all $n\ge\ol N_2.$

$(*)$\,\,\, Note that this implies that $0\le r_n\le 1$ for all $n\ge\ol N_2.$

Hence,
$$r_n\ge \left(\a_n-\b_n+1\right)r_{n+1}+\b_nr_{n+2},\mbox{ for all }n\ge\ol N_2.$$

Let us define for every $n>\ol N_2$ the sequence $\Xi_n=\frac{\b_n n^p}{(n+1)^p-n^p}.$
Then, since $p>0$ one has $\lim_{n\To+\infty}\Xi_n=+\infty.$

Since
$$\lim_{n\To+\infty}\left(\a_n-\b_n+1\right)-\left(1+\frac{\b_{n+1}}{\Xi_{n+1}}-\frac{\b_{n-1}}{1+\frac{\b_{n}}{\Xi_{n}}}\right)=\lim_{n\To+\infty}\a_n>0,$$  there exists $\ol k\in\N,\,\ol k\ge\ol N_2$ such that for all $n\ge\ol k$ one has
$$\a_n-\b_n+1\ge 1+\frac{\b_{n+1}}{\Xi_{n+1}}-\frac{\b_{n-1}}{1+\frac{\b_{n}}{\Xi_{n}}}.$$

Consequently,
$$r_n\ge \left(1+\frac{\b_{n+1}}{\Xi_{n+1}}-\frac{\b_{n-1}}{1+\frac{\b_{n}}{\Xi_{n}}}\right)r_{n+1}+\b_nr_{n+2},\mbox{ for all }n\ge\ol k,$$
or, equivalently
\begin{equation}\label{e12}
r_n+\frac{\b_{n-1}}{1+\frac{\b_{n}}{\Xi_{n}}}r_{n+1}\ge \left(1+\frac{\b_{n+1}}{\Xi_{n+1}}\right)\left(r_{n+1}+\frac{\b_n}{1+\frac{\b_{n+1}}{\Xi_{n+1}}}r_{n+2}\right),
\end{equation}
for all $n\ge\ol k.$
Now \eqref{e12} leads to
$$\prod_{k=\ol k}^n\left( r_k+\frac{\b_{k-1}}{1+\frac{\b_{k}}{\Xi_k}}r_{k+1}\right)\ge \prod_{k=\ol k}^n\left(1+\frac{\b_{k+1}}{\Xi_{k+1}}\right)\prod_{k=\ol k}^n\left(r_{k+1}+\frac{\b_k}{1+\frac{\b_{k+1}}{\Xi_{k+1}}}r_{k+2}\right),$$
hence after simplifying we get
\begin{equation}\label{egen}\left(r_{\ol k}+\frac{\b_{\ol k-1}}{1+\frac{\b_{\ol k}}{\Xi_{\ol k}}}r_{\ol k+1}\right)\prod_{k=\ol k}^n\frac{1}{1+\frac{\b_{k+1}}{\Xi_{k+1}}}\ge
r_{n+1}+\frac{\b_n}{1+\frac{\b_{n+1}}{\Xi_{n+1}}}r_{n+2}.
\end{equation}

But, $\frac{\b_{n+1}}{\Xi_{n+1}}=\frac{(n+2)^p}{(n+1)^p}-1,$ hence
$$\prod_{k=\ol k}^n\frac{1}{1+\frac{\b_{k+1}}{\Xi_{k+1}}}=\prod_{k=\ol k}^n\frac{(k+1)^p}{(k+2)^p}=\frac{(\ol k+1)^p}{(n+2)^p}.$$
By denoting $\left(r_{\ol k}+\frac{\b_{\ol k-1}}{1+\frac{\b_{\ol k}}{\Xi_{\ol k}}}r_{\ol k+1}\right)(\ol k+1)^p=a_1$,  we have
$$a_1 \frac{1}{(n+2)^p}\ge r_{n+1}+\frac{\b_n}{1+\frac{1}{n+1}\b_{n+1}}r_{n+2}.$$

Hence,
\begin{equation}\label{e13}
a_1 \frac{1}{n^p}\ge a_1 \frac{1}{(n+1)^p}\ge r_n= g(y_n)-g(\ol x)+\d_n\|x_n-x_{n-1}\|^2\ge g(y_n)-g(\ol x)
\end{equation}
which is (a$_1$).

For (a$_2$) we start from   Lemma \ref{desc} and \eqref{generaldiscrete} and we have
$$g(x_n)-g(y_n)\le\<\n g(y_n),x_n-y_n\>+\frac{L_g}{2}\|x_n-y_n\|^2=$$
$$\frac1s\left\<(x_n-x_{n+1})+\frac{\b n}{n+\a}(x_n-x_{n-1}),-\frac{\b n}{n+\a}(x_n-x_{n-1})\right\>+\frac{L_g}{2}\left(\frac{\b n}{n+\a}\right)^2\|x_n-x_{n-1}\|^2= $$
$$-\left(\frac{\b n}{n+\a}\right)^2\frac{2-sL_g}{2s}\|x_n-x_{n-1}\|^2+\frac1s\left\<x_{n+1}-x_n,\frac{\b n}{n+\a}(x_n-x_{n-1})\right\>.$$
By using the inequality $\<X,Y\>\le \frac12\left(a^2\|X\|^2+\frac{1}{a^2}\|Y\|^2\right)$ for all $X,Y\in\R^m, a\in \R\setminus\{0\}$, we obtain
$$\left\<x_{n+1}-x_n,\frac{\b n}{n+\a}(x_n-x_{n-1})\right\>\le \frac12\left(\frac{1}{2-sL_g}\|x_{n+1}-x_n\|^2+(2-sL_g)\left(\frac{\b n}{n+\a}\right)^2\|x_n-x_{n-1}\|^2\right),$$ consequently
$$g(x_n)-g(y_n)\le\frac{1}{2s(2-sL_g)}\|x_{n+1}-x_n\|^2.$$

From \eqref{ineq} we have
$$\|x_{n}-x_{n-1}\|^2\le \frac{1}{{D}}((g(y_{n})+\d_{n}\|x_{n}-x_{n-1}\|^2)- (g(y_{n+1})+\d_{n+1}\|x_{n+1}-x_{n}\|^2))$$ and since the sequence $(g(y_{n})+\d_{n}\|x_{n+1}-x_{n}\|^2)_{n\ge \ol k}$ is decreasing and has the limit $g(\ol x)$, we obtain that $g(y_{n+1})+\d_{n+1}\|x_{n+1}-x_{n}\|^2\ge g(\ol x)$, consequently
\begin{equation}\label{ee}
\|x_{n}-x_{n-1}\|^2\le\frac{1}{{D}}{r_{n}}.
\end{equation}

Hence, for all $n\ge\ol k$ one has
\begin{equation}\label{ee1}
g(x_n)-g(y_n)\le\frac{1}{2sD(2-sL_g)}r_{n+1}.
\end{equation}

Now, the identity $g(x_n)-g(\ol x)=(g(x_n)-g(y_n))+(g(y_n)-g(\ol x))$ and (a$_1$) lead to
$$g(x_n)-g(\ol x)\le \frac{1}{2sD(2-sL_g)}r_{n+1}+a_1\frac{1}{n^p}$$
for every $n>\ol k$, which combined with \eqref{e13} give
$$g(x_n)-g(\ol x)\le \frac{1}{2sD(2-sL_g)}a_1\frac{1}{(n+2)^p}+a_1\frac{1}{n^p}\le a_1\left(1+\frac{1}{2sD(2-sL_g)}\right)\frac{1}{n^p}=a_2\frac{1}{n^p},$$
for every $n>\ol k$.

For (a$_3$) observe, that  by summing up \eqref{e11} from $n\ge\ol k$ to $P>n$ and using the triangle inequality we obtain
 $$\|x_{P}-x_{n-1}\|\le \sum_{k={n}}^P \|x_{k}-x_{k-1}\|\le$$
  $$-\|x_{n}-x_{n-1}\|+\|x_{P+1}-x_P\|+\frac{9M}{4D}(\varphi(H(y_{n},u_{n})-H(\ol{x},\ol{x}))-\varphi(H(y_{P+1},u_{P+1})-H(\ol{x},\ol{x}))).$$

By  letting $P\To+\infty$ we get
$$\|x_{n-1}-\ol x\|\le -\|x_{n}-x_{n-1}\|+\frac{9M}{4D}\varphi(H(y_{n},u_{n})-H(\ol{x},\ol{x}))\le \frac{9M}{4D}\varphi(H(y_{n},u_{n})-H(\ol{x},\ol{x})).$$
But, $\varphi(t)=\frac{K}{1-\t}t^{1-\t},$ hence
\begin{equation}\label{e1111}
\|x_{n-1}-\ol x\|\le \frac{9MK}{4D(1-\t)}(H(y_{n},u_{n})-H(\ol{x},\ol{x}))^{1-\t}= M_1r_{n}^{1-\t},
\end{equation} where $M_1=\frac{9MK}{4D(1-\t)}.$

But $(*)$ assures that $0\le r_{n}\le 1$ which combined with $\t\in\left(0,\frac12\right]$ leads to
$r_{n}^{1-\t}\le \sqrt{r_{n}}$, consequently we have
$$\|x_{n-1}-\ol x\|\le M_1\sqrt{r_{n}}.$$

The conclusion follow by \eqref{e13}, since  we have
$$\|x_{n}-\ol x\|\le M_1\sqrt{a_1}\frac{1}{n^{\frac{p}{2}}}=a_3\frac{1}{n^{\frac{p}{2}}}$$
for every $n>\ol k.$

Finally, for $n> \ol k$ we have
$$\|y_n-\ol x\|=\left\|x_n+\frac{\b n}{n+\a}(x_n-x_{n-1})-\ol x\right\|\le$$
$$\left(1+\frac{\b  n}{n+\a}\right)\|x_n-\ol x\|+\frac{\b  n}{n+\a}\|x_{n-1}-\ol x\|\le$$
$$\left(1+\frac{\b  n}{n+\a}\right)a_3\frac{1}{n^{\frac{p}{2}}}+\frac{\b  n}{n+\a}a_3\frac{1}{n^{\frac{p}{2}}}\le$$
$$\left(1+2\frac{\b  n}{n+\a}\right)a_3\frac{1}{n^{\frac{p}{2}}}.$$

Let $a_4=(1+2\b)a_3.$ Then
$$\|y_n-\ol x\|\le a_4\frac{1}{n^{\frac{p}{2}}},$$
for all $n> \ol k$, which proves (a$_4$).
\end{proof}

\begin{remark} In the previous theorem we obtained convergence rates with  order $p,$ for every $p>0.$ This happened when we took in \eqref{egen} $$\frac{\b_{n+1}}{\Xi_{n+1}}=\frac{(n+2)^p}{(n+1)^p}-1.$$ But actually we have shown more. If one takes $\frac{\b_{n+1}}{\Xi_{n+1}}=\rho_{n+1}>0$ where $\lim_{n\To+\infty}\rho_n=0$ then one obtains that there exits $\ol k\in\N$  and $A_1>0$ such that for all $n\ge\ol k$ one has
$$\a_n-\b_n+1\ge 1+\rho_{n+1}-\frac{\b_{n-1}}{1+\rho_n}$$ hence \eqref{egen} becomes
$$g(y_n)-g(\ol x)\le A_1\prod_{k=\ol k+1}^{n}\frac{1}{1+\rho_k}.$$
From here, as in the proof of Theorem \ref{th-conv-rate}, one can derive that
$$g(x_n)-g(\ol x)\le A_2\prod_{k=\ol k+1}^{n}\frac{1}{1+\rho_k}\mbox{ for some }A_2>0,$$
and
$$\|x_n-\ol x\|=\mathcal{O}\left(\sqrt{\prod_{k=\ol k+1}^{n}\frac{1}{1+\rho_k}}\right)\mbox{ and }\|y_n-\ol x\|=\mathcal{O}\left(\sqrt{\prod_{k=\ol k+1}^{n}\frac{1}{1+\rho_k}}\right).$$
Having in mind this general result, and taking into account that in \cite{BCL-AA}, for the dynamical system \eqref{eee11} which, as it is shown in Introduction, can be viewed as the continuous counterpart of the  numerical scheme \eqref{generaldiscrete}, it was obtained finite time convergence of the generated trajectories for $\t\in\left(0,\frac12\right)$ and exponential convergence rate for $\t=\frac12$, it seems a valid question whether we can obtain exponential convergence rate for the sequences generated by \eqref{generaldiscrete}, by choosing an appropriate sequence $\rho_n.$ We show in what follow that this is not possible.
We have
$$\prod_{k=\ol k+1}^{n}\frac{1}{1+\rho_k}=e^{-\sum_{k=\ol k+1}^{n}\ln(1+\rho_k)}.$$
Obviously $\ln(1+\rho_k)>0,$ for all $k>\ol k$ and $\lim_{k\To+\infty}\ln(1+\rho_k)=0.$
Now, by using the Ces\`aro-Stolz theorem we obtain that
$$\lim_{n\To+\infty}\frac{\sum_{k=\ol k+1}^{n}\ln(1+\rho_k)}{n}=\lim_{n\To+\infty}\ln(1+\rho_{n+1})=0,$$
hence $\sum_{k=\ol k+1}^{n}\ln(1+\rho_k)=\mathfrak{o}(n),$ which shows that
$$\mathcal{O}\left(\prod_{k=\ol k+1}^{n}\frac{1}{1+\rho_k}\right)>\mathcal{O}\left(e^{-n}\right).$$
\end{remark}

\begin{remark}\label{r3} According to \cite{LP}, $H$ is KL with {\L}ojasiewicz exponent $\t\in\left[\frac12,1\right),$ whenever $g$ is KL with {\L}ojasiewicz exponent $\t\in\left[\frac12,1\right).$ Therefore, we have the following corollary.
\end{remark}

\begin{corollary} In the settings of problem \eqref{opt-pb} consider the sequences $(x_n)_{n\in\N},\,(y_n)_{n\in\N}$ generated by Algorithm \eqref{generaldiscrete}. Assume that $g$ is bounded from below and that $(x_n)_{n\in\N}$ is bounded, let $\ol x\in\crit(g)$ be such that $\lim_{n\To+\infty}x_n=\ol x$ and suppose that $g$
fulfills the {\L}ojasiewicz property at $\ol x$ with {\L}ojasiewicz exponent  $\t=\frac12.$ Then, for every $p>0$ there exist $a_1,a_2,a_3,a_4>0$ and $\ol k\in\N$ such that the following statements hold true:
\begin{itemize}
\item[$(\emph{a}_1)$] $g(y_{n})-g(\ol x)\le a_1 \frac{1}{n^p}$ for  every $n> \ol k$,
\item[$(\emph{a}_2)$] $g(x_{n})-g(\ol x)\le a_2\frac{1}{n^p}$ for  every $n>\ol k$,
\item[$(\emph{a}_3)$] $\|x_n-\ol x\|\le a_3 \frac{1}{n^{\frac{p}{2}}}$ for  every $n>\ol k$,
\item[$(\emph{a}_4)$] $\|y_n-\ol x\|\le a_4 \frac{1}{n^{\frac{p}{2}}}$ for all  $n>\ol k$.
\end{itemize}
\end{corollary}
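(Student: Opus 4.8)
The plan is to observe that this corollary is an immediate consequence of Theorem~\ref{th-conv-rate} together with Remark~\ref{r3}. Indeed, the hypotheses match exactly, except that here we assume the \L{}ojasiewicz property for $g$ at $\ol x$ with exponent $\t=\frac12$, rather than directly for $H$ at $(\ol x,\ol x)$. So the only work is to transfer the \L{}ojasiewicz property from $g$ to $H$ while controlling the exponent, and then invoke Theorem~\ref{th-conv-rate} verbatim.

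First I would recall the statement quoted in Remark~\ref{r3} (from \cite{LP}): if $g$ satisfies the \L{}ojasiewicz property at $\ol x$ with exponent $\t\in\left[\frac12,1\right)$, then $H(x,y)=g(x)+\frac12\|x-y\|^2$ satisfies the \L{}ojasiewicz property at $(\ol x,\ol x)$ with the same exponent $\t$. Since $\t=\frac12\in\left[\frac12,1\right)$, this applies directly and yields that $H$ fulfills the \L{}ojasiewicz property at $(\ol x,\ol x)\in\crit H$ with exponent $\t=\frac12\in\left(0,\frac12\right]$. Thus all the hypotheses of Theorem~\ref{th-conv-rate} are met: $g$ is bounded from below, $(x_n)_{n\in\N}$ is bounded, $\ol x\in\crit(g)$ with $\lim_{n\To+\infty}x_n=\ol x$, and $H$ has the \L{}ojasiewicz property at $(\ol x,\ol x)$ with exponent in $\left(0,\frac12\right]$.

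Then I would simply apply Theorem~\ref{th-conv-rate}: for every $p>0$ there exist $a_1,a_2,a_3,a_4>0$ and $\ol k\in\N$ such that conclusions $(\mathrm{a}_1)$--$(\mathrm{a}_4)$ hold, which are precisely the four statements claimed in the corollary. This completes the proof.

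There is essentially no obstacle here beyond correctly citing Remark~\ref{r3}; the one point worth double-checking is the compatibility of the exponent ranges, namely that the exponent $\t=\frac12$ lies simultaneously in $\left[\frac12,1\right)$ (the range for which the transfer result of \cite{LP} guarantees $H$ inherits the exponent) and in $\left(0,\frac12\right]$ (the range required by Theorem~\ref{th-conv-rate}), which it does since $\left[\frac12,1\right)\cap\left(0,\frac12\right]=\left\{\frac12\right\}$. Hence the corollary is exactly the boundary case $\t=\frac12$ of Theorem~\ref{th-conv-rate}, restated with the more easily verifiable hypothesis on $g$ in place of the hypothesis on $H$.
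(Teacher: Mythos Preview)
Your proposal is correct and follows exactly the approach the paper intends: the corollary is stated immediately after Remark~\ref{r3} with the words ``Therefore, we have the following corollary,'' and no separate proof is given, since it is precisely the combination of Remark~\ref{r3} (transferring the \L{}ojasiewicz exponent $\t=\tfrac12$ from $g$ to $H$ via \cite{LP}) with Theorem~\ref{th-conv-rate}. Your observation that $\t=\tfrac12$ is the unique value lying in both $\left[\tfrac12,1\right)$ and $\left(0,\tfrac12\right]$ is the key compatibility check, and it is correct.
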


In case the {\L}ojasiewicz exponent of the regularization function $H$ is $\t\in\left(\frac12,1\right)$ we have the following result concerning the convergence rates of the sequences generated by \eqref{generaldiscrete}.

\begin{theorem}\label{th-conv-rate1}  In the settings of problem \eqref{opt-pb} consider the sequences $(x_n)_{n\in\N},\,(y_n)_{n\in\N}$ generated by Algorithm \eqref{generaldiscrete}. Assume that $g$ is bounded from below and that $(x_n)_{n\in\N}$ is bounded, let $\ol x\in\crit(g)$ be such that $\lim_{n\To+\infty}x_n=\ol x$ and suppose that
$$H:\R^n\times \R^n\To\R,\,H(x,y)=g(x)+\frac{1}{2}\|x-y\|^2$$
fulfills the {\L}ojasiewicz property at $(\ol x,\ol x)\in\crit H$ with {\L}ojasiewicz exponent   $\t\in\left(\frac12,1\right).$ Then, there exist $b_1,b_2,b_3,b_4>0$ such that the following statements hold true:
\begin{itemize}
\item[$(\emph{b}_1)$] $g(y_n)-g(\ol x)\le b_1 \frac{1}{n^{\frac{1}{2\t-1}}},\mbox{ for all }n\ge\ol N_1+2$;
\item[$(\emph{b}_2)$] $g(x_n)-g(\ol x)\le b_2 \frac{1}{n^{\frac{1}{2\t-1}}},\mbox{ for all }n\ge\ol N_1+2$;
\item[$(\emph{b}_3)$] $\|x_{n}-\ol x\|\le b_3 \frac{1}{n^{\frac{1-\t}{2\t-1}}},\mbox{ for all }n\ge\ol N_1+2$;
\item[$(\emph{b}_4)$] $\|y_n-\ol x\|\le  b_4 \frac{1}{n^{\frac{1-\t}{2\t-1}}},\mbox{ for all }n>\ol N_1+2$,
\end{itemize}
where $\ol N_1\in\N$ was defined in the proof of Theorem \ref{th-conv-rate} .
\end{theorem}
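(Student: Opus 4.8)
The strategy mirrors the classical \L{}ojasiewicz-exponent argument adapted to the recursion derived in the proof of Theorem \ref{th-conv-rate}. Set $r_n=H(y_n,u_n)-H(\ol x,\ol x)>0$. From \eqref{e111} we already have, for $n$ large, an inequality of the form $r_n-r_{n+1}\ge \a_n r_{n+1}^{2\t}-\b_n(r_{n+1}-r_{n+2})$ with $\a_n,\b_n$ convergent to positive limits; absorbing the telescoping term $\b_n(r_{n+1}-r_{n+2})$ and using that $(r_n)$ is decreasing, one gets a clean recurrence $r_n-r_{n+2}\ge c\, r_{n+1}^{2\t}$ (or $r_{n+1}-r_{n+2}\ge c\,r_{n+2}^{2\t}$ after re-indexing) valid for all $n\ge \ol N_1$, for some constant $c>0$. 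Since now $2\t>1$, the standard discrete lemma applies: if a nonnegative nonincreasing sequence $(r_n)$ satisfies $r_n-r_{n+1}\ge c\,r_{n+1}^{2\t}$ with $2\t>1$, then $r_n=\mathcal{O}\!\left(n^{-1/(2\t-1)}\right)$. This gives $(\mathrm{b}_1)$ because $r_n\ge g(y_n)-g(\ol x)$, exactly as in \eqref{e13}.

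First I would carefully produce the single-step recurrence: combine \eqref{e111} with the fact that $\sum(r_{n+1}-r_{n+2})$ is summable (it telescopes and $r_n\to 0$), so for $n$ large the term $\b_n(r_{n+1}-r_{n+2})$ is dominated by a fixed fraction of $r_n-r_{n+1}$ whenever the latter is comparable to $r_{n+1}^{2\t}$; more robustly, one shows $r_n-r_{n+2}\ge (\a_n/2) r_{n+2}^{2\t}$ for all $n$ past some index by monotonicity of $(r_n)$ and $r_{n+1}\ge r_{n+2}$. Then I would invoke (or quickly reprove, as is done in \cite{att-b-sv2013,b-sab-teb}) the elementary lemma on sequences of the form $\epsilon_n-\epsilon_{n+1}\ge c\,\epsilon_{n+1}^{\mu}$, $\mu>1$: one shows $\epsilon_{n}^{1-\mu}$ grows at least linearly by examining $\epsilon_{n+1}^{1-\mu}-\epsilon_n^{1-\mu}$ and bounding it below by a positive constant (using concavity of $t\mapsto t^{1-\mu}$ on $(0,\infty)$ or the mean value theorem), whence $\epsilon_n\le C n^{-1/(\mu-1)}$. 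Applying this with $\mu=2\t$ yields $(\mathrm{b}_1)$ with exponent $1/(2\t-1)$ starting from $\ol N_1+2$.

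For $(\mathrm{b}_2)$ I would reuse the estimate $g(x_n)-g(y_n)\le \frac{1}{2s(2-sL_g)}\|x_{n+1}-x_n\|^2$ from the proof of Theorem \ref{th-conv-rate}, together with \eqref{ee}, i.e. $\|x_n-x_{n-1}\|^2\le r_n/D$; this gives $g(x_n)-g(y_n)\le \frac{1}{2sD(2-sL_g)}r_{n+1}$, and since $r_{n+1}=\mathcal{O}(n^{-1/(2\t-1)})$ dominates nothing worse, adding to $(\mathrm{b}_1)$ delivers $(\mathrm{b}_2)$ with the same rate. For $(\mathrm{b}_3)$ and $(\mathrm{b}_4)$ I would return to the summed KL inequality \eqref{e11}: summing from $n$ to $P$ and letting $P\to\infty$ gives $\|x_{n-1}-\ol x\|\le \frac{9M}{4D}\varphi(r_n)=\frac{9MK}{4D(1-\t)}r_n^{1-\t}$, exactly \eqref{e1111}. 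Plugging in $r_n=\mathcal{O}(n^{-1/(2\t-1)})$ yields $\|x_{n-1}-\ol x\|=\mathcal{O}\!\left(n^{-(1-\t)/(2\t-1)}\right)$, which is $(\mathrm{b}_3)$; then $(\mathrm{b}_4)$ follows from $y_n=x_n+\frac{\b n}{n+\a}(x_n-x_{n-1})$ and the triangle inequality, with $b_4=(1+2\b)b_3$, just as $(\mathrm{a}_4)$ was deduced from $(\mathrm{a}_3)$. The one point requiring care — the main obstacle — is the passage from the two-step inequality \eqref{e111} (with the telescoping correction term and the index-dependent coefficients $\a_n,\b_n$) to a bona fide one-step recurrence with a fixed constant; once that is in hand, the exponent $1/(2\t-1)$ comes out mechanically from the $\mu>1$ version of the discrete \L{}ojasiewicz lemma, and the remaining items are immediate corollaries of estimates already established for Theorem \ref{th-conv-rate}.
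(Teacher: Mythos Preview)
Your overall plan is correct, and items $(\mathrm{b}_2)$--$(\mathrm{b}_4)$ are handled exactly as the paper does. For $(\mathrm{b}_1)$, however, you take a genuinely different route from the paper. The paper does \emph{not} reduce \eqref{e111} to a one-step recurrence. Instead it keeps the two-step inequality, introduces $\phi(t)=\frac{K}{2\t-1}t^{1-2\t}$, and uses the integral bound $\phi(r_{n+1})-\phi(r_n)\ge K(r_n-r_{n+1})r_n^{-2\t}$ together with a dichotomy (either $r_n^{-2\t}\ge\tfrac12 r_{n+1}^{-2\t}$, in which case the increment of $\phi$ absorbs the recursion directly, or else $\phi(r_{n+1})-\phi(r_n)$ is bounded below by a fixed positive constant). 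Summing the resulting inequality $\phi(r_{n+1})-\phi(r_n)+\ol\b(\phi(r_{n+2})-\phi(r_{n+1}))\ge C\a_n$ telescopes to $r_n^{1-2\t}\gtrsim n$, which gives the rate.

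Your reduction $r_n-r_{n+2}\ge c\,r_{n+2}^{2\t}$ is valid (from $r_n-r_{n+1}+\b_n(r_{n+1}-r_{n+2})\le\max(1,\ol\b)(r_n-r_{n+2})$) and arguably more elementary, but two points need cleaning up. First, the parenthetical ``or $r_{n+1}-r_{n+2}\ge c\,r_{n+2}^{2\t}$ after re-indexing'' is wrong: you cannot get a single-step inequality this way, and your first heuristic (that $\b_n(r_{n+1}-r_{n+2})$ is dominated by a fraction of $r_n-r_{n+1}$) has no justification. Second, once you have $r_n-r_{n+2}\ge c\,r_{n+2}^{2\t}$, the standard lemma applies only to the even and odd subsequences separately, giving $r_{2k},r_{2k+1}=\mathcal{O}(k^{-1/(2\t-1)})$, which then recombines to $r_n=\mathcal{O}(n^{-1/(2\t-1)})$; you should make this step explicit. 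The paper's approach avoids this parity split at the cost of the case analysis; yours avoids the case analysis at the cost of the parity split. Either way the constants are non-explicit, so neither buys anything quantitatively.
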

\begin{proof}
Also here, to avoid triviality, in what follows we assume  that $H(y_{n},u_{ n})>H(\ol x,\ol x),$ for all $n\ge N.$

  From \eqref{e111} we have that for every $n\ge\ol N_1$ it holds
$$r_n-r_{n+1}\ge \a_n r_{n+1}^{2\t}-\b_n(r_{n+1}-r_{n+2}),$$ where $\a_n= \frac{\Delta_n}{K^2 S_n}\mbox{ and  }\b_n=\frac{2\Delta_n}{s^2S_n\Delta_{n+1}}.$

Hence,
$$(r_n-r_{n+1})r_{n+1}^{-2\t}+\b_n(r_{n+1}-r_{n+2})r_{n+1}^{-2\t}\ge\a_n,$$ for all $n\ge\ol N_1.$

Consider the function $\phi(t)=\frac{K}{2\t-1}t^{1-2\t}$ where $K$ is the constant defined at the {\L}ojasiewicz property of $H$. Then $\phi'(t)=-K t^{-2\t}$ and we have
$$\phi(r_{n+1})-\phi(r_n)=\int_{r_n}^{r_{n+1}}\phi'(t)dt=K\int_{r_{n+1}}^{r_n}t^{-2\t}dt\ge K(r_n-r_{n+1})r_n^{-2\t}.$$
Analogously,
$$\phi(r_{n+2})-\phi(r_{n+1})\ge K(r_{n+1}-r_{n+2})r_{n+1}^{-2\t}.$$

Assume that for some $n\ge\ol N_1$ it holds that $r_n^{-2\t}\ge\frac 12 r_{n+1}^{-2\t}.$

Then
\begin{equation}\label{e14}
\phi(r_{n+1})-\phi(r_n)+\b_n(\phi(r_{n+2})-\phi(r_{n+1}))\ge \frac{K}{2}(r_n-r_{n+1})r_{n+1}^{-2\t}+K\b_n(r_{n+1}-r_{n+2})r_{n+1}^{-2\t}\ge
\end{equation}
$$\frac{K}{2}(r_n-r_{n+1})r_{n+1}^{-2\t}+\frac{K}{2}\b_n(r_{n+1}-r_{n+2})r_{n+1}^{-2\t}\ge\frac{K}{2}\a_n.$$

Conversely, if $2r_n^{-2\t}<r_{n+1}^{-2\t}$ for some $n\ge\ol N_1$, then
$$2^{\frac{2\t-1}{2\t}}r_n^{1-2\t}<r_{n+1}^{1-2\t},$$
hence,
$$\phi(r_{n+1})-\phi(r_n)=\frac{K}{2\t-1}(r_{n+1}^{1-2\t}-r_n^{1-2\t})\ge \frac{K}{2\t-1}\left(2^{\frac{2\t-1}{2\t}}-1\right)r_n^{1-2\t}\ge \frac{K}{2\t-1}\left(2^{\frac{2\t-1}{2\t}}-1\right)r_{\ol N_1}^{1-2\t}=C_1.$$

Consequently, 
$$\phi(r_{n+1})-\phi(r_n)+\b_n(\phi(r_{n+2})-\phi(r_{n+1}))\ge C_1(1+\b_n).$$
Let $C_2=\inf_{n\ge \ol N_1}\frac{C_1(1+\b_n)}{\a_n}>0.$ Then,
\begin{equation}\label{e15}
\phi(r_{n+1})-\phi(r_n)+\b_n(\phi(r_{n+2})-\phi(r_{n+1}))\ge C_2 \a_n.
\end{equation}

From \eqref{e14} and \eqref{e15} we get that there exists $C>0$ such that
$$\phi(r_{n+1})-\phi(r_n)+\b_n(\phi(r_{n+2})-\phi(r_{n+1}))\ge C \a_n,\mbox{ for all }  n\ge\ol N_1.$$
Let $\ol \b=\sup_{n\ge \ol N_1}\b_n.$ Then the  latter relation becomes
$$\phi(r_{n+1})-\phi(r_n)+\ol \b(\phi(r_{n+2})-\phi(r_{n+1}))\ge C \a_n,\mbox{ for all }  n\ge\ol N_1,$$
which leads to
$$\sum_{k=\ol N_1}^n\bigg(\phi(r_{k+1})-\phi(r_k)+\ol \b(\phi(r_{k+2})-\phi(r_{k+1}))\bigg)\ge C \sum_{k=\ol N_1}^n \a_k.$$

Consequently,
$$\phi(r_{n+1})-\phi(r_{\ol N_1})+\ol \b(\phi(r_{n+2})-\phi(r_{\ol N_1+1}))\ge C \sum_{k=\ol N_1}^n \a_k$$
and by using the fact that the sequence $(r_n)_{n\ge\ol N_1}$ is decreasing and $\phi$ is also decreasing, we obtain
$$(1+\ol\b)\phi(r_{n+2})\ge  C \sum_{k=\ol N_1}^n \a_k.$$

In other words
$$r_n^{1-2\t}\ge \frac{C(2\t-1)}{K(1+\ol \b)}\sum_{k=\ol N_1}^{n-2} \a_k,\mbox{ for all }n\ge\ol N_1+2.$$
Hence,
$$r_n\le \left(\frac{C(2\t-1)}{K(1+\ol \b)}\right)^{\frac{-1}{2\t-1}}\left(\sum_{k=\ol N_1}^{n-2} \a_k\right)^{\frac{-1}{2\t-1}},\mbox{ for all }n\ge\ol N_1+2.$$

Since $\sum_{k=\ol N_1}^{n-2} \a_k\ge\ul\a(n-\ol N_1-1)$, where $0<\ul \a=\inf_{k\ge \ol N_1} \a_k$ we have that there exists $M>0$ such that
$$\left(\sum_{k=\ol N_1}^{n-2} \a_k\right)^{\frac{-1}{2\t-1}}\le \ul\a^{\frac{-1}{2\t-1}}(n-\ol N_1-1)^{\frac{-1}{2\t-1}}\le\ul\a^{\frac{-1}{2\t-1}}M n^{\frac{-1}{2\t-1}}, \mbox{ for all }n\ge\ol N_1+2.$$

Therefore, we have
$$r_n\le \left(\frac{C(2\t-1)}{K(1+\ol \b)}\right)^{\frac{-1}{2\t-1}}\ul\a^{\frac{-1}{2\t-1}}M n^{\frac{-1}{2\t-1}}=b_1 n^{\frac{-1}{2\t-1}}, \mbox{ for all }n\ge\ol N_1+2.$$

But, $r_n=g(y_n)-g(\ol x)+\d_n\|x_n-x_{n-1}\|^2$, consequently
$$g(y_n)-g(\ol x)\le b_1 n^{\frac{-1}{2\t-1}},\mbox{ for all }n\ge\ol N_1+2$$ and (b$_1$) is proved.

For (b$_2$) observe that \eqref{ee1} holds  for all $n\ge\ol N_1$, hence for all $n\ge\ol N_1$ one has
$$g(x_n)-g(y_n)\le\frac{1}{2sD(2-sL_g)}r_{n+1}\le \frac{1}{2sD(2-sL_g)}b_1 (n+1)^{\frac{-1}{2\t-1}}.$$

Thus, there exists $M>0$ such that
$$g(x_n)-g(\ol x)=(g(x_n)-g(y_n))+(g(y_n)-g(\ol x))\le  \left(\frac{1}{2sD(2-sL_g)}b_1 M+b_1\right)n^{\frac{-1}{2\t-1}}=b_2n^{\frac{-1}{2\t-1}},$$
 for all $n\ge\ol N_1+2$.

For proving $(b_3)$ we use \eqref{e1111}. Note that the relation
$\|x_{n}-\ol x\|\le M_1r_{n}^{1-\t}$ holds for all $n\ge \ol N_1.$
Hence,
$$\|x_{n}-\ol x\|\le M_1\left(b_1 n^{\frac{-1}{2\t-1}}\right)^{1-\t},\mbox{ for all }n\ge\ol N_1+2.$$
Consequently,
$$\|x_{n}-\ol x\|\le b_3 n^{\frac{\t-1}{2\t-1}},\mbox{ for all }n\ge\ol N_1+2,$$
where $b_3=M_1 b_1^{1-\t}$ and this proves (b$_3$).

For (b$_4$) observe that for $n\ge \ol N_1+3$ we have
$$\|y_n-\ol x\|=\left\|x_n+\frac{\b n}{n+\a}(x_n-x_{n-1})-\ol x\right\|\le$$
$$\left(1+\frac{\b  n}{n+\a}\right)\|x_n-\ol x\|+\frac{\b  n}{n+\a}\|x_{n-1}-\ol x\|\le$$
$$\left(1+\frac{\b  n}{n+\a}\right)b_3 n^{\frac{\t-1}{2\t-1}}+\frac{\b  n}{n+\a} b_3 (n-1)^{\frac{\t-1}{2\t-1}}\le$$
$$\left(\left(1+\frac{\b  n}{n+\a}\right)b_3+\frac{\b  n}{n+\a} b_3\right) (n-1)^{\frac{\t-1}{2\t-1}}\le b_4 n^{\frac{\t-1}{2\t-1}},$$
where one can take $b_4= \sup_{n\ge\ol N_1+3} \left(\left(1+\frac{\b  n}{n+\a}\right)b_3+\frac{\b  n}{n+\a} b_3\right) \left(\frac{n}{n-1}\right)^{\frac{1-\t}{2\t-1}}.$
\end{proof}

According to Remark \ref{r3} we have the following corollary.

\begin{corollary} In the settings of problem \eqref{opt-pb} consider the sequences $(x_n)_{n\in\N},\,(y_n)_{n\in\N}$ generated by Algorithm \eqref{generaldiscrete}. Assume that $g$ is bounded from below and that $(x_n)_{n\in\N}$ is bounded, let $\ol x\in\crit(g)$ be such that $\lim_{n\To+\infty}x_n=\ol x$ and suppose that $g$
fulfills the {\L}ojasiewicz property at $\ol x$ with {\L}ojasiewicz exponent   $\t\in\left(\frac12,1\right).$ Then, there exist $b_1,b_2,b_3,b_4>0$ such that the following statements hold true:
\begin{itemize}
\item[$(\emph{b}_1)$] $g(y_n)-g(\ol x)\le b_1 \frac{1}{n^{\frac{1}{2\t-1}}},\mbox{ for all }n\ge\ol N_1+2$;
\item[$(\emph{b}_2)$] $g(x_n)-g(\ol x)\le b_2 \frac{1}{n^{\frac{1}{2\t-1}}},\mbox{ for all }n\ge\ol N_1+2$;
\item[$(\emph{b}_3)$] $\|x_{n}-\ol x\|\le b_3 \frac{1}{n^{\frac{1-\t}{2\t-1}}},\mbox{ for all }n\ge\ol N_1+2$;
\item[$(\emph{b}_4)$] $\|y_n-\ol x\|\le  b_4 \frac{1}{n^{\frac{1-\t}{2\t-1}}},\mbox{ for all }n>\ol N_1+2$,
\end{itemize}
where $\ol N_1\in\N$ was defined in the proof of Theorem \ref{th-conv-rate}.
\end{corollary}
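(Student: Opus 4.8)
The plan is to obtain this corollary as an immediate consequence of Theorem~\ref{th-conv-rate1}, the only real work being to transfer the {\L}ojasiewicz property from $g$ to the regularization $H(x,y)=g(x)+\frac12\|x-y\|^2$.

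First I would check that the point at which the {\L}ojasiewicz inequality for $H$ is needed is indeed a critical point of $H$: by Lemma~\ref{regularization}~(iii) one has $\crit H=\{(x,x)\in\R^m\times\R^m:x\in\crit g\}$, so from $\ol x\in\crit g$ it follows that $(\ol x,\ol x)\in\crit H$, and it makes sense to speak of the {\L}ojasiewicz property of $H$ at $(\ol x,\ol x)$. Next I would invoke Remark~\ref{r3}, which (following \cite{LP}) asserts that whenever $g$ is KL with {\L}ojasiewicz exponent in $\left[\frac12,1\right)$, then $H$ is KL with the same exponent; since the hypothesis $\t\in\left(\frac12,1\right)$ lies in this range, it follows that $H$ fulfills the {\L}ojasiewicz property at $(\ol x,\ol x)$ with exponent $\t\in\left(\frac12,1\right)$.

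At this point every hypothesis of Theorem~\ref{th-conv-rate1} is in force: $g$ is bounded from below, $(x_n)_{n\in\N}$ is bounded, $\lim_{n\To+\infty}x_n=\ol x\in\crit g$, and $H$ has the {\L}ojasiewicz property at $(\ol x,\ol x)$ with exponent $\t\in\left(\frac12,1\right)$. Applying that theorem verbatim then produces constants $b_1,b_2,b_3,b_4>0$ together with the estimates (b$_1$)--(b$_4$), where $\ol N_1$ is the integer introduced in the proof of Theorem~\ref{th-conv-rate}; this is precisely the assertion of the corollary.

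Since the argument is a pure reduction, there is essentially no obstacle beyond correctly citing the exponent-preservation result of \cite{LP}; the one point worth a sanity check is that the open interval $\left(\frac12,1\right)$ appearing in the hypothesis is contained in the interval $\left[\frac12,1\right)$ for which that transfer result is stated, which it clearly is.
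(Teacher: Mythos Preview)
Your proposal is correct and matches the paper's approach exactly: the paper presents this corollary as an immediate consequence of Remark~\ref{r3} (the exponent-preservation result from \cite{LP}) applied to Theorem~\ref{th-conv-rate1}, with no additional argument given.
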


\section{Conclusions}

  In this paper we show the convergence of a Nesterov type algorithm in a full nonconvex setting, assuming that a regularization of the objective function satisfies the Kurdyka-{\L}ojasiewicz property. For this purpose as a starting point we show a sufficient decrease property for the iterates generated by our algorithm. Though our algorithm is asymptotically equivalent to Nesterov's accelerated gradient method, we cannot obtain full equivalence due to the fact that in order to obtain the above mentioned decrease property we cannot allow the inertial parameter, more precisely the parameter $\b$, to attain the value 1.
Nevertheless, we obtain convergence rates  of order $p$ for every $p>0$, for the sequences generated by our numerical scheme but also for the  function values in these sequences, provided the objective function, or a regularization of the objective function, satisfies the {\L}ojasiewicz property with {\L}ojasiewicz exponent $\t\in\left(0,\frac12\right].$ We also show that, at least with our techniques,  exponential convergence rates cannot be obtained.
In case the  {\L}ojasiewicz exponent of the objective function, or a regularization of the objective function, is $\t\in\left(\frac12,1\right),$ we obtain polynomial convergence rates.

A related future research is the study of a  modified FISTA algorithm in a nonconvex setting. Indeed, let $f:\R^m\To\ol\R$ be a  proper convex and lower semicontinuous function and let $g:\R^m\To\R$ be a (possible nonconvex) smooth function with $L_g$ Lipschitz continuous gradient.
Consider the optimization problem
$$\inf_{x\in\R^m}f(x)+g(x).$$
We associate to this optimization problem the following proximal-gradient algorithm.
For $x_0,y_0\in\R^m$ consider
\begin{equation}\label{fist}\left\{\begin{array}{lll}
\ds x_{n+1}=\prox\nolimits_{s f}(y_n-s\n g(y_n)),\\
\\
\ds y_n=x_n+\frac{\b n}{n+\a}(x_n-x_{n-1}),
\end{array}\right.
\end{equation}
where $\a>0,\,\b\in(0,1)$  and $0<s<\frac{2(1-\b)}{L_g}.$
Obviously, when $f\equiv 0$ then \eqref{fist} becomes the numerical scheme \eqref{generaldiscrete} studied in the present paper.

We emphasize that \eqref{fist} has a  similar formulation as the modified FISTA algorithm studied by Chambolle and Dossal in \cite{ch-do2015} and the convergence of the generated sequences, to a critical point of the objective function $f+g,$  would open the gate for the study of FISTA type algorithms in a nonconvex setting.


\begin{thebibliography}{999}

\bibitem{attouch-bolte2009} H. Attouch, J. Bolte, {\it On the convergence of the proximal algorithm for nonsmooth functions involving analytic
features}, Mathematical Programming 116(1-2) Series B, 5-16, 2009

\bibitem{att-b-red-soub2010} H. Attouch, J. Bolte, P. Redont, A. Soubeyran, {\it Proximal alternating minimization and projection
methods for nonconvex problems: an approach based on the Kurdyka-\L{}ojasiewicz inequality}, Mathematics of Operations Research
35(2), 438-457, 2010

\bibitem{att-b-sv2013} H. Attouch, J. Bolte, B.F. Svaiter, {\it Convergence of descent methods for semi-algebraic and tame problems:
proximal algorithms, forward-backward splitting, and regularized Gauss-Seidel methods}, Mathematical Programming 137(1-2) Series A, 91-129, 2013

\bibitem{att-c-p-r-math-pr2018} H. Attouch, Z. Chbani, J. Peypouquet, P. Redont, {\it Fast convergence of inertial dynamics and algorithms with asymptotic
vanishing viscosity}, Math. Program. 168(1-2) Ser. B, 123-175, 2018

\bibitem{att-c-r-arx2017} H. Attouch, Z. Chbani, H. Riahi, {\it Rate of convergence of the Nesterov accelerated gradient method in the
subcritical case $\alpha\leq 3$}, ESAIM: COCV (2017). doi:10.1051/cocv/2017083

\bibitem{att-p-r-jde2016} H. Attouch, J. Peypouquet, P. Redont, {\it Fast convex optimization via inertial dynamics with Hessian driven damping},
J. Differential Equations 261(10), 5734-5783, 2016

\bibitem{bete09} A. Beck, M. Teboulle, {\it A Fast Iterative Shrinkage-
Thresholding Algorithm for Linear Inverse Problems}, SIAM Journal on Imaging Sciences 2(1), 183-202, 2009


\bibitem{b-sab-teb} J. Bolte, S. Sabach, M. Teboulle, {\it Proximal alternating linearized minimization
for nonconvex and nonsmooth problems}, Mathematical Programming Series A (146)(1-2), 459-494, 2014

\bibitem{b-d-l2006} J. Bolte, A. Daniilidis, A. Lewis, {\it The \L{}ojasiewicz inequality for nonsmooth subanalytic functions with applications
to subgradient dynamical systems}, SIAM Journal on Optimization 17(4), 1205-1223, 2006

\bibitem{b-d-l-s2007} J. Bolte, A. Daniilidis, A. Lewis, M. Shiota, {\it Clarke subgradients of stratifiable functions},
SIAM Journal on Optimization 18(2), 556-572, 2007

\bibitem{b-d-l-m2010} J. Bolte, A. Daniilidis, O. Ley, L. Mazet, {\it Characterizations of \L{}ojasiewicz inequalities:
subgradient flows, talweg, convexity}, Transactions of the American Mathematical Society 362(6), 3319-3363, 2010

\bibitem{BCL} R.I. Bo\c t,  E.R. Csetnek,  S.C. L\' aszl\' o, {\it  Approaching nonsmooth nonconvex minimization through
second-order proximal-gradient dynamical systems}, Journal of Evolution Equations (2018). https://doi.org/10.1007/s00028-018-0441-7

\bibitem{BCL1} R.I. Bo\c t¸, E.R. Csetnek, S.C. L\'aszl\'o, {\it An inertial forward-backward algorithm for minimizing
the sum of two non-convex functions}, Euro Journal on Computational Optimization 4(1), 3-25, 2016

\bibitem{BCL-AA} R.I. Bo\c t,  E.R. Csetnek,  S.C. L\' aszl\' o, {\it A second order dynamical approach with variable damping to nonconvex smooth minimization}, Applicable Analysis (2018). https://doi.org/10.1080/00036811.2018.1495330

\bibitem{ch-do2015} A. Chambolle, Ch. Dossal, {\it On the convergence of the iterates of the "fast iterative shrinkage/thresholding algorithm''},
J. Optim. Theory Appl. 166(3), 968-982, 2015

\bibitem{FGP} P. Frankel, G. Garrigos, J. Peypouquet, {\it Splitting Methods with Variable Metric for Kurdyka–{\L}ojasiewicz Functions and General Convergence Rates},  Journal of Optimization Theory and Applications, 165(3),  874–900, 2015

\bibitem{kurdyka1998} K. Kurdyka, {\it On gradients of functions definable in o-minimal structures},
Annales de l'institut Fourier (Grenoble) 48(3), 769-783, 1998

\bibitem{LP} G. Li, T. K. Pong, {\it Calculus of the Exponent of Kurdyka-{\L}ojasiewicz Inequality and Its Applications to Linear Convergence of First-Order Methods}, Foundations of Computational Mathematics, 1-34 , 2018

\bibitem{lojasiewicz1963} S. \L{}ojasiewicz, {\it Une propri\'{e}t\'{e} topologique des sous-ensembles analytiques r\'{e}els},
Les \'{E}quations aux D\'{e}riv\'{e}es Partielles, \'{E}ditions du Centre National de la Recherche Scientifique Paris, 87-89, 1963

\bibitem{PL} D.A. Lorenz, T. Pock, {\it An inertial forward–backward algorithm for monotone inclusions}, J. Math.
Imaging Vis., 51(2), 311-325 2015


\bibitem{nesterov83} Y.E. Nesterov, {\it A method for solving the convex programming problem with convergence rate $O(1/k^2)$},
(Russian) Dokl. Akad. Nauk SSSR 269(3), 543-547, 1983

\bibitem{Nest} Y. Nesterov , {\it Introductory lectures on convex optimization: a basic course}. Kluwer Academic Publishers, Dordrecht, 2004

\bibitem{poly} B. T. Polyak, {\it Some methods of speeding up the convergence of iteration methods}, U.S.S.R. Comput. Math. Math. Phys., 4(5):1-17, 1964

\bibitem{rock-wets} R.T. Rockafellar, R.J.-B. Wets, {\it Variational Analysis}, Fundamental Principles of Mathematical Sciences 317,
Springer-Verlag, Berlin, 1998

\bibitem{su-boyd-candes} W. Su, S. Boyd, E.J. Candes, {\it A differential equation for modeling Nesterov's
accelerated gradient method: theory and insights}, Journal of Machine Learning Research, 17, 1-43, 2016

\end{thebibliography}
\end{document}